\theoremstyle{plain}
\newtheorem{thm}{Theorem}[section]
\newtheorem{lem}[thm]{Lemma}
\newtheorem{prop}[thm]{Proposition}
\newtheorem{exa}[thm]{Example}
\newtheorem{defn}[thm]{Definition}
\newtheorem{rem}[thm]{Remark}
\newtheorem{psythm}{Psychological Theorem}
\DeclareMathOperator{\rank}{rank}
\DeclareMathOperator{\Ker}{Ker}
\DeclareMathOperator{\Hom}{Hom}
\DeclareMathOperator{\Ext}{Ext}
\DeclareMathOperator{\Homo}{H}
\DeclareMathOperator{\Int}{Int}
\DeclareMathOperator{\Span}{span}
\newcommand{\barsigma}{\overline{\sigma}}
\DeclareMathOperator{\diag}{diag}
\DeclareMathOperator{\vol}{vol}
\newcommand{\R}{\mathbb{R}}
\newcommand{\C}{\mathbb{C}}
\newcommand{\A}{\mathbb{A}}
\newcommand{\Z}{\mathbb{Z}}
\newcommand{\LL}{\mathbb{L}}
\newcommand{\T}{\mathbb{T}}
\newcommand{\Gm}{\mathbb{G}_m}
\newcommand{\s}{\sigma}
\newcommand{\bs}{\barsigma}
\newcommand{\tk}{\tilde{\bf k}}
\newcommand{\ii}{\sqrt{-1}}
\crefname{thm}{Theorem}{Theorems}
\crefname{bthm}{Theorem}{Theorems}
\crefname{bprop}{Proposition}{Propositions}
\crefname{blem}{Lemma}{Lemmata}
\title{On Mellin-Barnes integral representations for GKZ hypergeometric functions}
\author{Saiei-Jaeyeong Matsubara-Heo\footnote{The author is financially supported by  JSPS KAKENHI Grant Number 17J03916}
\\ {\small (Graduate School of Mathematical Sciences, The University of Tokyo)}}
\begin{document}

\date{}
\maketitle

\begin{abstract}      
We consider Mellin-Barnes integral representations of GKZ hypergeometric equations. We construct integration contours in an explicit way and show that suitable analytic continuations give rise to a basis of solutions.
\newline {\it Keywords---Mellin-Barnes integral representations, Laplace integral representations, GKZ hypergeometric functions}
\newline {\it 2010 Mathematics Subject Classification Code: 33C70}
\end{abstract}

\section{Introduction}

\indent

In this short paper, we discuss Mellin-Barnes integral representations of GKZ hypergeometric functions as well as its relation to Laplace integral representations. We  remember some basic notation. Let $n<N$ be positive integers, $c\in\mathbb{C}^{n\times 1}$ be a fixed parameter, and let $\{ {\bf a}(1),\dots,{\bf a}(N)\}\subset\mathbb{Z}^{N\times 1}$ be lattice points. We set $A=({\bf a}(1)\mid \cdots\mid {\bf a}(N))=(a_{ij}).$ Throughout this paper, we assume that $\Z A=\displaystyle\sum_{j=1}^N\Z {\bf a}(j)=\Z^{n\times 1}.$
The GKZ hypergeometric system is given by the family of equations 
\begin{equation}
M_A(c):
\left\{
\begin{array}{llll}
E_i\cdot f(z)&=&0 &(i=1,\cdots, n)\\
\Box_u\cdot f(z)&=&0& (u\in L_A=\Ker_{\mathbb{Z}}A)
\end{array}
\right.
\end{equation}
where

\begin{equation}
E_i=\sum_{j=1}^Na_{ij}z_j\frac{\partial}{\partial z_j}+c_i,\;\;
\Box_u=\prod_{u_j>0}\left(\frac{\partial}{\partial z_j}\right)^{u_j}-\prod_{u_j<0}\left(\frac{\partial}{\partial z_j}\right)^{-u_j}.
\end{equation}

\noindent
It is well-known that GKZ system $M_A(c)$ is holonomic (\cite{A}) and in particular, has a finite rank. For a non-resonant parameter $c\in\C^{n\times 1}$, one also has $ \rank M_A(c)=\vol_\Z(\Delta_A)$ (\cite{A}), where $\Delta_A$ is a convex polytope called Newton polytope and defined by the formula $\Delta_A=\text{c.h.}\{ 0,{\bf a}(1),\dots,{\bf a}(N)\}$ and $\vol_\Z(\Delta_A)$ is the Euclidian volume of $\Delta_A$ multiplied by $n!$ so that the standard simplex has a volume $1$.  GKZ system $M_A(c)$ naturally has a formal series solution $\phi_v(z)$ associated to any vector $v\in\C^{N\times 1}$ such that $Av=-c$ and defined by the formula
\begin{equation}
\varphi_v(z)=\sum_{u\in L_A}\frac{z^{u+v}}{\Gamma({\bf 1}+u+v)},
\end{equation}
where $L_A=\Ker\left( A\times:\Z^{N\times 1}\rightarrow\Z^{n\times 1}\right)$. This formal series $\varphi_v(z)$ is called $\Gamma$-series in the literature. Various classical hypergeometric series such as Thomae's generalised hypergeometric series, Appell-Lauricella series, Horn's series are recovered in terms of $\Gamma$-series by taking  suitable matrices $A$ and vectors $c$ and $v$ (\cite{GGR}, \cite{SST}).  Moreover, one can construct a basis of solutions consisting only of $\Gamma$-series with the aid of regular triangulation of $\Delta_A$ (\cite{FF}, \cite{GGR}).

On the other hand, classical hypergeometric functions have another aspect: integral representations. For example, Gauss hypergeometric function ${}_2F_1(\alpha,\beta,\gamma ;z)$ has the so-called Mellin-Barnes integral representation:
\begin{equation}
{}_2F_1(\alpha,\beta,\gamma;z)=\frac{\Gamma(\gamma)}{\Gamma(\alpha)\Gamma(\beta)}\int_C\frac{\Gamma(\alpha-s)\Gamma(\beta-s)}{\Gamma(\gamma-s)}(e^{\pi\sqrt{-1}}z)^{-s}ds
\end{equation}
where the integration contour $C$ is taken to be the so-called Hankel contour starting from $-\infty-\ii$, extending to $-\delta-\ii$, going around the origin in the positive direction, and going back to $-\infty-\ii$. It is sometimes more convenient to take $C$ as the Barnes contour $C=\delta+\ii (-\infty,\infty)$ $(\delta>0)$. On the other hand, rewriting the well-known Euler integral representation, we can obtain Laplace integral representation

\begin{align}
& \quad{}_2F_1(\alpha, \beta, \gamma ; z)\nonumber\\
 &=\frac{\pi\Gamma(\gamma)}{\sin\pi(\gamma-\alpha)\Gamma(\alpha)\Gamma(\beta)}\int_0^\infty\int_0^\infty\int^1_0 e^{ -(1-t_1)t_2-(1-zt_1)t_3} t_1^{\alpha-1}t_2^{\alpha-\gamma}t_3^{\beta-1}dt_1dt_2dt_3\;\; (|z|<1).
\end{align}
These integral representations are also generalised to GKZ hypergeometric functions. Mellin-Barnes integral for GKZ hypergeometric functions was introduced in \cite{B} and applied to computations of subgroups of monodromy groups. To our knowledge, \cite{B} is the only article that treats Mellin-Barnes integral for GKZ hypergeometric functions. On the other hand, several authors developed a systematic study of Laplace integral representations (\cite{ET}, \cite{SW}). In particular, a basis of cycles for Laplace integrals was constructed in \cite{ET} by the method of steepest descent under a restrictive assumption on the Newton polytope $\Delta_A$ and non-degeneracy assumption on the parameter $c$.

Based on these studies, we consider the following problem in this paper:

\begin{description}
\item[Problem]\mbox{}\\
 Construct a basis of solutions by means of Mellin-Barnes or Laplace integral representations so that it can be related to $\Gamma$-series.
\end{description}

\noindent
This problem is certainly basic, but it seems that the general description has been neglected. We solve the problem for Laplace integral representations when the Newton polyhedron has the shape of a diamond. We construct a standard basis of cycles via K\"unneth type argument. We will find that integration on these cycles naturally gives rise to  Mellin-Barnes integral representations. In the latter half, we introduce Mellin-Barnes integral representations for GKZ hypergeometric functions which is slightly different from the one treated in \cite{B}. After examining that Mellin-Barnes integral representations satisfy GKZ system $M_A(c)$ and that it is convergent, we relate it to series representations by computing residues of the integrand. Finally, we show that a suitable analytic continuation gives rise to a basis of solutions (\cref{thm:MainTheoremForMellin-Barnes}). We will find that the transformation matrix between Mellin-Barnes integral and series solutions associated to a regular triangulation $T$ is given in terms a character matrix of a finite Abelian group $G_\s$ associated to each simplex $\s\in T$.

We will discuss the construction of integration cycles for Laplace integrals without any restriction on the Newton polytope in the forthcoming paper. We were inspired by \cite{CG}, \cite{FF} and \cite{KP}.

\section{Construction of integration cycles for Laplace integral representations when the Newton polytope has a diamond shape}

\indent
In this section, we briefly collect known results on Laplace integral representations of GKZ hypergeometric functions and construct a standard basis of cycles when the Newton polytope has the shape of a diamond. We say that a parameter vector $c\in\C^{n\times 1}$ is non-resonant if $c\notin\Z+\Span_\C\{\Gamma\}$ for any face $\Gamma$ of $\Delta_A$ such that $0\in\Gamma.$ The following definition is standard in the literature.

\begin{defn}(\cite{A})
For any $z\in\A^N,$ we say that a Laurent polynomial $h_z(x)=\displaystyle\sum_{j=1}^Nz_jx^{{\bf a}(j)}$ is Newton non-degenerate if for any face $\Gamma$ of $\Delta_A$ such that $0\notin\Gamma$, one has 
\begin{equation}
\left\{x\in(\Gm)^n\mid d_xh^\Gamma_z(x)=0\right\}=\varnothing.
\end{equation}
Here, $h_z^\Gamma(x)=\displaystyle\sum_{{\bf a}(j)\in\Gamma}z_jx^{{\bf a}(j)}$. 
\end{defn}
\noindent
We put $\Omega=\left\{z\in\A^N\mid h_z(x)\text{ is Newton non-degenerate}\right\}$. The set $\Omega$ is called the Newton non-degenerate locus of $A.$ The following result is of fundamental importance.

\begin{thm}\label{theorem:fundamental}(Adolphson\cite{A}, 
 Schulze-Walther\cite{SW})
\begin{enumerate}
\item $M_A(c)$ is holonomic and $M_A(c)$ is a connection on $\Omega.$
\item If $c$ is non-resonant, $\rank M_A(c)=vol_\mathbb{Z}\Delta_A.$
\item Let $\pi:(\Gm)^n_x\times\mathbb{A}^N_z\rightarrow\mathbb{A}^N_z$ be a natural projection and suppose $c$ is non-resonant. One has a natural isomorphism of $D_{\A^N}$-modules
\begin{equation}
M_A(c)\simeq\int_\pi\mathcal{O}_{(\Gm)^n_x\times\mathbb{A}^N_z}e^{h_z(x)}x^{\bf c}.
\end{equation}
\end{enumerate}
\end{thm}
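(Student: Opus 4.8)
The plan is to deduce all three assertions from a single object --- the right-hand side of the third statement, the $\pi$-pushforward of the rank-one twisted connection $\mathcal{O}\,e^{h_z(x)}x^{\bf c}$ --- together with the combinatorics of $\Delta_A$ under the Newton non-degeneracy hypothesis. I would first record holonomicity of $M_A(c)$ by the standard characteristic-variety argument: the principal symbols of the $\Box_u$ generate the toric ideal of $A$ and, together with those of the $E_i$, cut out a variety of dimension $N$ in $T^*\A^N$, so $M_A(c)$ is holonomic and has finite rank, which is the first half of (1). The core of the argument is then to construct a canonical comparison morphism realizing (3) and to compute the rank of the pushforward; the connection statement and (2) will follow by transport of structure along the resulting isomorphism.

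To construct the morphism I would verify that $g=e^{h_z(x)}x^{\bf c}$, regarded as the generating section of the relative twisted de Rham complex of $\pi$, is annihilated by the GKZ operators modulo exact forms. For the binomial operators this is an identity of functions: since $\partial_{z_j}g=x^{{\bf a}(j)}g$, one has $\prod_{u_j>0}\partial_{z_j}^{u_j}g=x^{\sum_{u_j>0}u_j{\bf a}(j)}g$ and $\prod_{u_j<0}\partial_{z_j}^{-u_j}g=x^{-\sum_{u_j<0}u_j{\bf a}(j)}g$, and $u\in L_A=\Ker_\Z A$ forces $\sum_{u_j>0}u_j{\bf a}(j)=-\sum_{u_j<0}u_j{\bf a}(j)$, so $\Box_u g=0$. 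For the Euler operators I would integrate by parts along the torus: a direct computation gives $x_i\partial_{x_i}g=\big(c_i+\sum_j a_{ij}z_j x^{{\bf a}(j)}\big)g=E_i\cdot g$, and since $x_i\partial_{x_i}g$ is the logarithmic derivative of a top form along the fibre it represents the zero class in $\int_\pi$. Sending the cyclic generator of $M_A(c)=D_{\A^N}/(E_i,\Box_u)$ to the class of $g$ therefore defines a $D_{\A^N}$-linear map $M_A(c)\to\int_\pi\mathcal{O}\,e^{h_z}x^{\bf c}$.

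To compute the target I would restrict to $\Omega$ and apply the projection formula: the fibre over $z\in\Omega$ is the twisted de Rham cohomology of $e^{h_z}x^{\bf c}$ on $(\Gm)^n$. Newton non-degeneracy is exactly the tameness condition making this cohomology concentrated in degree $n$ with dimension independent of $z\in\Omega$, equal to the number of critical points of $h_z+\sum_i c_i\log x_i$, i.e.\ the number of torus solutions of the system $c_i+\sum_j a_{ij}z_j x^{{\bf a}(j)}=0$ $(i=1,\dots,n)$. A BKK/Kouchnirenko count identifies this with the normalized volume of the common Newton polytope of these equations, which is exactly $\Delta_A$ since the constants $c_i$ supply the origin; hence the dimension is $\vol_\Z(\Delta_A)$. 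Local constancy of the fibre dimension shows the pushforward is $\mathcal{O}_\Omega$-coherent, i.e.\ a connection on $\Omega$, which gives (1), and the count yields the rank in (2).

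The step I expect to be the main obstacle is upgrading the comparison map to an isomorphism. I would show it is surjective onto the top cohomology: the classes $x^{{\bf a}(j)}[g]=\partial_{z_j}[g]$ generate $H^n$ under non-degeneracy, so the image already contains a generating set and the map is onto over $\Omega$. Surjectivity gives $\rank M_A(c)\ge\vol_\Z(\Delta_A)$, while the matching upper bound $\rank M_A(c)\le\vol_\Z(\Delta_A)$ for non-resonant $c$ comes from a Gröbner deformation of the GKZ ideal, whose initial ideal is the toric initial ideal whose standard monomials are counted by a regular triangulation of $\Delta_A$. Equality of ranks forces the surjection to be an isomorphism of connections on $\Omega$, and since both sides are holonomic and agree on the open dense locus $\Omega$, the isomorphism extends to $D_{\A^N}$-modules. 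The genuinely delicate inputs, all resting on Newton non-degeneracy rather than on formal manipulation, are the concentration and local constancy of the twisted cohomology and the surjectivity of the comparison map; non-resonance enters only to pin the rank to the full volume.
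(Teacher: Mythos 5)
You should first note that the paper does not prove this theorem at all: it is quoted as known, with parts (1)--(2) attributed to Adolphson \cite{A} and part (3) to Schulze--Walther \cite{SW}. So your proposal is a from-scratch reconstruction, and its skeleton (comparison map from the cyclic generator to the class of $g=e^{h_z(x)}x^{\bf c}$, fibrewise identification with twisted de Rham cohomology, rank matching) does follow the architecture of those cited proofs. But two steps fail as written. The decisive one is your last: ``since both sides are holonomic and agree on the open dense locus $\Omega$, the isomorphism extends to $D_{\A^N}$-modules.'' This is not a valid principle. Two holonomic modules that are isomorphic over a dense open set can differ by subquotients supported on the complement: $\C[x]$ and $\C[x,x^{-1}]$ are both holonomic $D_{\A^1}$-modules, isomorphic over $\Gm$, yet not isomorphic. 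Your comparison map is already globally defined, so the actual task is to show that its kernel and cokernel --- which are supported on $\A^N\setminus\Omega$ once you know the map is an isomorphism over $\Omega$ --- vanish. This is precisely where the content of Schulze--Walther lies: one needs non-resonance of $c$ (in their Euler--Koszul framework, the ``not strongly resonant'' condition) to rule out sub- and quotient modules of both sides supported on proper closed subsets. Without that input, your argument only proves (3) over $\Omega$, which is strictly weaker than the stated isomorphism of $D_{\A^N}$-modules.

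The second gap is the rank count. The $i$-th critical equation $c_i+\sum_j a_{ij}z_jx^{{\bf a}(j)}=0$ has Newton polytope $\mathrm{conv}\bigl(\{0\}\cup\{{\bf a}(j)\mid a_{ij}\neq 0\}\bigr)$, which need not equal $\Delta_A$; moreover the coefficients $a_{ij}z_j$ are not independent generic constants (the same $z_j$ occurs in every row, with fixed ratios), so BKK gives only an upper bound by a mixed volume, not the claimed identity with $\vol_\Z(\Delta_A)$. The equality $\dim\Homo^n=\vol_\Z(\Delta_A)$, together with the concentration in degree $n$, \emph{is} Adolphson's theorem, proved by a Koszul-complex/filtration argument on the toric ring (or via Kouchnirenko's Euler-characteristic theorem plus vanishing); it cannot be obtained by a naive critical-point count. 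Relatedly, your surjectivity step asserts that the classes $x^{{\bf a}(j)}[g]$ generate $\Homo^n$; what the image actually contains is the span of $x^{m}[g]$ for $m$ in the semigroup $\mathbb{N}A$, and the statement that these span $\Homo^n$ is a nontrivial reduction (from the cone over $\Delta_A$ to $\mathbb{N}A$) that again uses non-resonance. Finally, the Gr\"obner-deformation upper bound you invoke is established in the literature for homogeneous $A$ (the regular case); the present paper allows confluent $A$, for which that bound also rests on Adolphson's de Rham computation rather than on initial-ideal combinatorics.
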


\noindent
\cref{theorem:fundamental} (3) suggests that general solutions $f(z)$ of $M_A(c)$ should have the form 
\begin{equation}\label{LaplaceIntegral}
f(z)=\int e^{h_z(x)}x^{c-1}dx.
\end{equation}
We call (\ref{LaplaceIntegral}) Laplace integral representation. Now, let us describe the space of local solutions at $z\in\Omega.$ Throughout this paper, we assume that $c$ is non-resonant. By Cauchy-Kowalevsky-Kashiwara theorem and the commutativity of inverse image and analytification, we have
\begin{eqnarray}
\R\mathcal{H}om_{\mathcal{D}_{\C^N}}(\left(M_A(c)\right)^{an},\mathcal{O}_{\C^N})_z&\simeq&\R \Hom_{\C}(\left(\LL\iota_z^* M_A(c)\right)^{an},\C) \nonumber\\
 &\simeq&\R \Hom_{\C}(\left(\LL\iota_z^* M_A(c)\right),\C) \label{isom1}.
\end{eqnarray}
On the other hand, by the definition of integration of a $D$-module and projection formula, for any $z\in\A^N$ and inclusion $\iota_z:\{z\}\hookrightarrow\A^N,$ we have
\begin{equation}\label{deRham}
\LL\iota_z^*\int_\pi\mathcal{O}_{\mathbb{T}^n_x\times\mathbb{A}^N_z}e^{h_z(x)}x^{\bf c}\simeq(\Omega^{\bullet+n}((\Gm)^n),\nabla_z),
\end{equation}
where differential $\nabla_z$ is given by
\begin{equation}
\nabla_z=d+dh_z\wedge+\sum_{i=1}^nc_i\frac{dx_i}{x_i}\wedge.
\end{equation}
By the general result of M. Hien (\cite{H}), we see that the dual space of p-th cohomology group of this complex is given in terms of rapid decay cycles:
\begin{equation}\label{RDduality}
\int: \Homo_{p+n}^{r.d.}\left(\T^n,(\mathcal{O}_{(\Gm)^n},\nabla_z^\vee)\right)\overset{\sim}{\rightarrow}\left(\Homo^p\left(\Omega^{\bullet+n}((\Gm)^n),\nabla_z\right)\right)^\vee.
\end{equation}
For readers' convenience, we briefly explain the construction of rapid decay homology groups $\Homo^{r. d.}$ following \cite{H}. For each fixed $z$, we compactify $\T^n_x$ to a smooth projective variety $X$ so that $h_z(x)$ can be prolonged to a meromorphic function, i.e., so that we have a commutative diagram
\begin{equation*}
\xymatrix{
& \T^n_x \ar[r]^{h_z} \ar[d]_{\rm inclusion}&\C \ar[d]^{\rm inclusion}\\
& X   \ar[r]^{\tilde{h}_z}                             &\mathbb{P}^1.}
\end{equation*}
Then, setting $D=X\setminus\T^n_x$, we have a decomposition $D=D^{irr}\cup D^{\prime}$, where $D^{\prime}=\tilde{h}_z^{-1}(\infty)$. We assume that $D$ is normal crossing. Then we consider the real oriented blow-up $\tilde{\mathbb{P}}^1=\C\cup S^1\infty$ of a projective line and $\tilde{X}$ along $D$. Here, $S^1$ is the unit circle $S^1=\{ z\in\C\mid |z|=1\}$. Then we have a commutative diagram 
\begin{equation*}
\xymatrix{
& \tilde{X} \ar[r]^{\tilde{h}_z} \ar[d]_{\pi}&\tilde{\mathbb{P}}^1 \ar[d]^{\pi}\\
& X   \ar[r]^{\tilde{h}_z}                             &\mathbb{P}^1,}
\end{equation*}
where $\pi$ is a canonical morphism of the real oriented blow-up. Putting \par
\noindent
$D^{r.d.}=\pi^{-1}\left((e^{\frac{\pi}{2}\ii},e^{\frac{3\pi}{2}\ii})\infty\right)\subset \pi^{-1}(D^\prime)$, we define the p-th rapid decay homology group by the formula
\begin{equation}
\Homo_{p}^{r.d.}\left(\T^n,(\mathcal{O}_{(\Gm)^n},\nabla_z^\vee)\right)=\Homo_{p}\left(\T^n\cup D^{r.d.},D^{r.d.};\C x^c\right).
\end{equation}
The isomorphism (\ref{RDduality}) is given in terms of integration. Note that $\Homo^p\left(\Omega^{\bullet}((\Gm)^n),\nabla_z\right)$ is also denoted by
$\Homo^p_{dR}\left((\Gm)^n,(\mathcal{O}_{(\Gm)^n},\nabla_z)\right).$ Combining \cref{theorem:fundamental} (2), (\ref{isom1}), (\ref{deRham}), (\ref{RDduality}), we have

\begin{prop}\label{prop:solution}
For any $z\in\Omega^{an},$ we have
\begin{equation}
\mathcal{H}om_{\mathcal{D}_{\C^N}}(\left(M_A(c)\right)^{an},\mathcal{O}_{\C^N})_z\simeq\Homo_{n}^{r.d.}\left(\T^n,(\mathcal{O}_{(\Gm)^n},\nabla_z^\vee)\right).
\end{equation}
\end{prop}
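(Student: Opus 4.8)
The statement is essentially a concatenation of the four ingredients already assembled above, so the plan is to chain the displayed isomorphisms and then isolate the middle-degree piece. First I would invoke the Cauchy--Kowalevsky--Kashiwara theorem together with the compatibility of inverse image with analytification, i.e. the isomorphisms in (\ref{isom1}), to replace the stalk of the solution complex $\R\mathcal{H}om_{\mathcal{D}_{\C^N}}((M_A(c))^{an},\mathcal{O}_{\C^N})_z$ by the $\C$-linear dual complex $\R\Hom_\C(\LL\iota_z^* M_A(c),\C)$. This is the purely functorial input and costs nothing beyond quoting the comparison theorem and the algebraic--analytic comparison for the derived restriction.

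Next I would feed in \cref{theorem:fundamental} (3) to rewrite $M_A(c)$ as the integral $\int_\pi\mathcal{O}_{(\Gm)^n_x\times\A^N_z}e^{h_z(x)}x^{\bf c}$, and then apply (\ref{deRham}) to identify the derived restriction $\LL\iota_z^* M_A(c)$ with the twisted de Rham complex $(\Omega^{\bullet+n}((\Gm)^n),\nabla_z)$, which sits in cohomological degrees $-n,\dots,0$. Combining with the previous step, the solution complex at $z$ becomes $\R\Hom_\C((\Omega^{\bullet+n}((\Gm)^n),\nabla_z),\C)$, whose cohomology in degree $p$ is the $\C$-dual of $\Homo^{p}(\Omega^{\bullet+n}((\Gm)^n),\nabla_z)$.

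The crux is a degree-concentration argument. Because $z\in\Omega$ and $M_A(c)$ restricts to an integrable connection on $\Omega$ by \cref{theorem:fundamental} (1), the derived restriction $\LL\iota_z^* M_A(c)$ is concentrated in a single cohomological degree (degree $0$ in the convention of (\ref{deRham})) and is a finite-dimensional vector space whose dimension is $\rank M_A(c)=\vol_\Z\Delta_A$ by \cref{theorem:fundamental} (2). Comparing this with the de Rham description forces $\Homo^{p}(\Omega^{\bullet+n}((\Gm)^n),\nabla_z)=0$ for $p\neq 0$, so that the only surviving group is the top twisted de Rham cohomology $\Homo^n_{dR}((\Gm)^n,(\mathcal{O}_{(\Gm)^n},\nabla_z))=\Homo^{0}(\Omega^{\bullet+n}((\Gm)^n),\nabla_z)$. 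Consequently the derived dual collapses to an honest isomorphism in degree $0$, and $\mathcal{H}om_{\mathcal{D}_{\C^N}}((M_A(c))^{an},\mathcal{O}_{\C^N})_z$ is exactly $(\Homo^{0}(\Omega^{\bullet+n}((\Gm)^n),\nabla_z))^\vee$.

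Finally I would apply M. Hien's rapid-decay duality (\ref{RDduality}) with $p=0$, whose integration pairing identifies this dual with $\Homo_{n}^{r.d.}(\T^n,(\mathcal{O}_{(\Gm)^n},\nabla_z^\vee))$, yielding the asserted isomorphism. I expect the main obstacle to be not the formal chaining but the concentration claim: one must verify that holonomicity (connection property) on $\Omega$ really makes $\R\Hom_\C$ collapse in degree $0$, and that the analytic complex computing the stalk agrees with the algebraic twisted de Rham complex entering Hien's theorem, so that all the $an$ decorations are harmless. Once that is in place no residue or contour computation is required, and the equality $\rank M_A(c)=\vol_\Z\Delta_A$ serves only as a consistency check on dimensions.
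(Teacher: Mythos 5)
Your proposal is correct and takes essentially the same approach as the paper, whose entire proof is exactly the chain you describe: (\ref{isom1}), then (\ref{deRham}) via \cref{theorem:fundamental} (3), then Hien's duality (\ref{RDduality}) with $p=0$. Your additional degree-concentration argument (via \cref{theorem:fundamental} (1), (2)) is sound but not strictly needed for the degree-zero statement, because over the field $\C$ one has $\Homo^0(\R\Hom_{\C}(C^\bullet,\C))\simeq\left(\Homo^0(C^\bullet)\right)^\vee$ for any complex $C^\bullet$, so the dual of the top twisted de Rham cohomology appears directly.
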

\noindent
In general situation, one must consider toric blow-up to construct a good compactification of $(\Gm)^n$ to compute the rapid decay homology group. However, when Newton polytope has a very simple form, we can compute the space of rapid decay cycles as we shall see below.

\begin{lem}\label{lem:Kunneth}
\indent
Let $U_1,U_2$ be smooth Affine algebraic varieties over $\mathbb{C}$ and consider flat algebraic connections $(E_i,\nabla_i)$ on $U_i$ $(i=1,2)$.
There is a canonical isomorphism 
\begin{equation}
\displaystyle\bigoplus_{k_1+k_2=m}\Homo^{r.d.}_{k_1}\left(U_1,(E_1^\vee,\nabla_1^\vee)\right)\boxtimes\Homo^{r.d.}_{k_2}\left(U_2,(E_2^\vee,\nabla_2^\vee)\right)\overset{\varphi}{\tilde{\rightarrow}}\Homo^{r.d}_{m}\left(U_1\times U_2,((E_1\boxtimes E_2)^\vee,(\nabla_1\boxtimes\nabla_2)^\vee)\right)
\end{equation}
which is characterized by the formula
\begin{equation}
\int_{\varphi(\Gamma_1\otimes\Gamma_2)}\omega_1\wedge\omega_2=\biggl(\int_{\Gamma_1}\omega_1\biggr)\biggl(\int_{\Gamma_2}\omega_2\biggr),
\end{equation}
where $\Gamma_i\in\Homo^{r.d.}_{k_i}\left(U_i,(E_i^\vee,\nabla_i^\vee)\right)$ and $\omega_i\in\Homo^{k_i}_{dR}\left(U_i,(E_i,\nabla_i)\right)$ (i=1,2). $\varphi(\Gamma_1\otimes\Gamma_2)$ will also be denoted by $\Gamma_1\times\Gamma_2.$
\end{lem}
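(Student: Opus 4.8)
The plan is to construct $\varphi$ explicitly at the level of rapid decay chains and then to establish bijectivity by combining Hien's duality (\ref{RDduality}) with the algebraic Künneth theorem for de Rham cohomology. Throughout I identify a rapid decay homology class with a locally finite chain $c$ carrying a flat multivalued section $\varepsilon^\vee$ of the dual local system that decays rapidly toward the rapid decay part of the boundary, in the sense of \cite{H}. Given chains $c_1\otimes\varepsilon_1^\vee$ on $U_1$ and $c_2\otimes\varepsilon_2^\vee$ on $U_2$, I set
\[
\varphi\bigl((c_1\otimes\varepsilon_1^\vee)\otimes(c_2\otimes\varepsilon_2^\vee)\bigr)=(c_1\times c_2)\otimes(\varepsilon_1^\vee\boxtimes\varepsilon_2^\vee),
\]
the topological product of chains equipped with the exterior tensor of the two flat sections, which is a flat section of $(E_1\boxtimes E_2)^\vee\simeq E_1^\vee\boxtimes E_2^\vee$. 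Since $\partial(c_1\times c_2)=(\partial c_1)\times c_2\pm c_1\times(\partial c_2)$ and the coefficient section is flat, this respects boundaries and descends to a map on homology.

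The main obstacle is to check that the product chain again satisfies the rapid decay condition, and this forces me first to fix a good compactification of $U_1\times U_2$. Choosing good compactifications $X_i\supset U_i$ with normal crossing boundaries $D_i$, the product $X_1\times X_2$ (blown up further if necessary to separate the irregular and the $\tilde h$-polar components) compactifies $U_1\times U_2$ with normal crossing boundary $(D_1\times X_2)\cup(X_1\times D_2)$. The exponential, or more generally Stokes, datum of $E_1\boxtimes E_2$ near a boundary stratum is the sum of the pullbacks of the data of $E_1$ and of $E_2$, so the rapid decay locus of the product is assembled from products of the rapid decay loci of the factors; consequently a section decaying rapidly on each factor decays rapidly on the product. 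Verifying this multiplicativity of rapid decay, essentially a Fubini estimate near the corners of $D_1\times D_2$, is the technical heart of the argument.

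The characterizing integration formula is then Fubini's theorem applied to the product chain: writing $p_i$ for the two projections, for $\omega_i\in\Homo^{k_i}_{dR}(U_i,(E_i,\nabla_i))$ one has $\int_{c_1\times c_2}p_1^*\omega_1\wedge p_2^*\omega_2=\bigl(\int_{c_1}\omega_1\bigr)\bigl(\int_{c_2}\omega_2\bigr)$, the coefficient pairing factoring because $\varepsilon_1^\vee\boxtimes\varepsilon_2^\vee$ pairs the exterior product factorwise. This shows that integration against $\varphi(\Gamma_1\otimes\Gamma_2)$ is multiplicative, which is precisely the asserted characterization of $\varphi$.

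Finally I would prove that $\varphi$ is an isomorphism. By (\ref{RDduality}) the period pairing identifies each rapid decay homology group with the dual of the corresponding de Rham cohomology, and the de Rham cohomology of flat connections on smooth affine varieties satisfies the Künneth isomorphism $\Homo^\bullet_{dR}(U_1\times U_2,(E_1\boxtimes E_2,\nabla_1\boxtimes\nabla_2))\simeq\Homo^\bullet_{dR}(U_1,(E_1,\nabla_1))\otimes\Homo^\bullet_{dR}(U_2,(E_2,\nabla_2))$ via exterior product. Dualizing this and using $(V_1\otimes V_2)^\vee\simeq V_1^\vee\otimes V_2^\vee$ in finite dimension shows that the source and target of $\varphi$ have equal dimension. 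Injectivity follows from the integration formula: if $\varphi(\sum_i\Gamma_1^i\otimes\Gamma_2^i)=0$, then $\sum_i(\int_{\Gamma_1^i}\omega_1)(\int_{\Gamma_2^i}\omega_2)=0$ for all $\omega_1,\omega_2$, and perfectness of the two factor pairings forces $\sum_i\Gamma_1^i\otimes\Gamma_2^i=0$. Hence $\varphi$ is injective, and by the dimension count it is the desired isomorphism.
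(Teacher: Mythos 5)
Your overall strategy contains all the right ingredients, but your route is genuinely different from, and considerably heavier than, the paper's. The paper never touches chains at all: since $U_1,U_2$ are affine, the wedge product already gives an isomorphism of global-section de Rham complexes $(\Gamma(U_1,\Omega^\bullet(E_1)),\nabla_1)\boxtimes(\Gamma(U_2,\Omega^\bullet(E_2)),\nabla_2)\simeq(\Gamma(U_1\times U_2,\Omega^\bullet(E_1\boxtimes E_2)),\nabla_1\boxtimes\nabla_2)$; taking $m$-th cohomology (K\"unneth for tensor products of complexes of vector spaces) and dualizing through Hien's isomorphism (\ref{RDduality}) on each factor and on the product \emph{defines} $\varphi$, and the integration formula is then automatic because Hien's duality pairing is integration. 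In other words, the paper's $\varphi$ is simply the transpose of the de Rham K\"unneth isomorphism --- which is exactly the content of your final paragraph. So your last step, on its own, already proves the lemma as stated; the preceding chain-level construction is logically unnecessary for it.

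The genuine weak point of your write-up is the step you yourself call the technical heart: the claim that the topological product of two rapid decay chains again satisfies the rapid decay condition. The claim is true, but your justification (take the product of good compactifications, ``the Stokes datum of $E_1\boxtimes E_2$ is the sum of the pullbacks'', ``a Fubini estimate near the corners'') is a plausibility sketch, not a proof: in Hien's framework the product $X_1\times X_2$ need not be a good compactification for $E_1\boxtimes E_2$ without further blow-ups, and after blowing up, the product chain must be lifted and the decay estimates redone on the exceptional strata. Since your bijectivity argument is applied to this chain-level map, the gap infects the whole proof as written. The clean repair is to do what the paper does: discard the chain-level construction, define $\varphi$ abstractly by duality, and note that the characterization property pins it down because product forms span $\Homo^m_{dR}$ of the product and the pairing is perfect. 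What your longer route would buy, were the decay estimate carried out in full, is a geometric representative of $\varphi(\Gamma_1\otimes\Gamma_2)$ by an honest product cycle --- strictly more than the lemma claims, since the paper merely \emph{defines} $\Gamma_1\times\Gamma_2$ to be $\varphi(\Gamma_1\otimes\Gamma_2)$.
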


\begin{proof}
We can easily see that the wedge product gives an isomorphism of algebraic de Rham complexes 
\begin{equation}
(\Gamma(U_1,\Omega^\bullet(E_1)),\nabla_1)\boxtimes(\Gamma(U_2,\Omega^\bullet(E_2)),\nabla_2)\overset{\sim}{\rightarrow}(\Gamma(U_1\times U_2,\Omega^\bullet(E_1\boxtimes E_2)),\nabla_1\boxtimes\nabla_2).
\end{equation}
Taking their $m$-th cohomology and dual, we obtain the lemma. 
\end{proof}

Remember now that the Newton non-degenerate locus is decomposed into a direct product as we explain below. We put $\tau\subset\{ 1,\cdots, N\}$ to be the set $\tau=\biggl\{i|{\bf a}(i)\notin\displaystyle \bigcup_{\substack{\Gamma <\Delta_A\\ 0\notin\Gamma}}\Gamma\biggr\}.$ We regard $\Omega^{\bar{\tau}}\overset{\rm def}{=}\Omega\cap\displaystyle\bigcap_{i\in\tau}\{z_i=0\}$ as a  subset of $\mathbb{A}^{\bar{\tau}}$. Then, by the definition of non-degeneracy, we have
\begin{equation}
\Omega=\mathbb{A}^{\tau}\times\Omega^{\bar{\tau}}.
\end{equation}
Here, $\Omega^{\bar{\tau}}=\left\{z_{\bar{\tau}}\in\A^{\bar{\tau}}|h^{\bar{\tau}}_{z}(x)=\sum_{j\in\bar{\tau}}z_jx^{{\bf a}(j)}\text{ is Newton non-degenerate}\right\}.$

\noindent
Now we introduce the following assumption $(*)$:
\begin{quote}
$(*)$Laurent polynomial $h_z(x)$ is given by 
\begin{equation}
h_z(x)=\displaystyle\sum_{i=1}^l(z_i^+x_i^{a_i^+}+z_i^-x_i^{-a_i^-})+\sum_{i=l+1}^nz_ix_i^{a_i}+\sum_{j=n+1}^{N-l}z_jx^{{\bf a}(j)}, 
\end{equation}
where $a_i^{\pm},\; a_i\in\Z_{>0}$ and ${\bf a}(j)\in\Z^{n\times 1}.$ Moreover, if one puts 
\begin{equation}
\Delta=c. h.\left\{0, a_1^+{\bf e}(1),-a_1^-{\bf e}(1),\dots,a_l^+{\bf e}(l),-a_l^-{\bf e}(l),a_{l+1}{\bf e}(l+1),\dots,a_n{\bf e}(n)\right\},
\end{equation}
then one must have ${\bf a}(j)\in\Int\Delta\cup\displaystyle \bigcup_{\substack{\Gamma <\Delta\\ 0\in\Gamma}}\Gamma.$ Here, ${\bf e}(i)$ is the standard basis of $\Z^{n\times 1}.$ 

\end{quote}
Under this assumption, it can readily be seen that a parameter vector $c$ is non-resonant if $c\notin\displaystyle\bigcup_{i=l+1}^n\Bigl(\Z^{n\times 1}+\C{\bf e}(1)+\cdots+\C\widehat{{\bf e}(i)}+\cdots+\C{\bf e}(n)\Bigr)$ and $\Omega$ has the form $\Omega=\A^{N-n-l}\times\Omega^\prime$ where $\Omega^\prime$ is a subset of $\A^{n+l}$ with coordinates $(z_1^+,z_1^-,\dots,z_l^+,z_l^-,z_{l+1},\dots,z_n)$ defined by 
\begin{equation}
\Omega^\prime=\left\{(z_1^+,z_1^-,\dots,z_l^+,z_l^-,z_{l+1},\dots,z_n)\in\A^{n+l}\mid z_1^+z_1^-\cdots z_l^+z_l^-z_{l+1}\cdots z_n\neq0\right\}.
\end{equation}

Now let us take a point $\overset{\circ}{z}=(0,\overset{\circ}{z}_{\bar{\tau}})\in\Omega.$ By \cref{prop:solution}, we know that 

\noindent
$\Homo^{r.d.}_n(\mathbb{T}^n, (\mathcal{O}_{(\Gm)^n}\nabla_{\overset{\tiny \circ}{z}}^\vee))$ corresponds to solutions of $M_A(c).$ For any given data $\gamma\in\C, m_1,m_2,m\in\Z_{>0},w_1,w_2,w\in\C^*,$ we define connections on $\mathcal{O}_{\Gm}$ by
\begin{equation}
\nabla(\gamma;(w_1,m_1),(w_2,m_2))=d+\gamma \frac{dy}{y}\wedge+m_1w_1y^{m_1-1}dy\wedge-m_2w_2y^{-m_2-1}dy\wedge
\end{equation}
and by
\begin{equation}
\nabla(\gamma;(w,m))=d+\gamma \frac{dy}{y}\wedge+mwy^{m-1}dy\wedge,
\end{equation}
where $y$ is a coordinate of $\Gm.$ By definition, we have 
\begin{equation}
\Homo^0_{dR}\Bigl(\Gm,\bigl(\mathcal{O}_{\Gm},\nabla(\gamma;(w_1,m_1),(w_2,m_2))\bigr)\Bigr)=0
\end{equation}
and
\begin{equation}
\Homo^0_{dR}\Bigl(\Gm,(\mathcal{O}_{\Gm},\nabla(\gamma;(w,m)))\Bigr)=0
\end{equation}
for any $\gamma\in\C, m_1,m_2,m\in\Z_{>0},w_1,w_2,w\in\C^*.$ On the other hand, we can explicitly describe the 1st homology groups
\begin{equation}
\Homo^{r.d.}_{1}\Bigl(\T,(\mathcal{O}_{\Gm},\nabla(\gamma;(w_1,m_1),(w_2,m_2))^\vee)\Bigr)
\end{equation}
and
\begin{equation}
\Homo^{r.d.}_{1}\Bigl(\T,(\mathcal{O}_{\Gm},\nabla(\gamma;(w,m))^\vee)\Bigr).
\end{equation}

\noindent

For any positive integers $k,m$, we put $\gamma_{k,m}=\exp\{(\frac{2k-1}{m})\pi\sqrt{-1}\}$, and 

\noindent
$\Gamma_{k,m}=-[\varepsilon, \infty)e^{\varepsilon^\prime\sqrt{-1}}\gamma_{k,m}+C_{k,m}+[\varepsilon,\infty)e^{-\varepsilon^\prime\sqrt{-1}}\gamma_{k+1,m}$ where $\varepsilon$ and $\varepsilon^\prime$ are small positive numbers and
\begin{equation}
C_{k,m}(t)=\varepsilon e^{(\frac{2(k+t)-1}{m})\pi\sqrt{-1}}\; (\frac{m\varepsilon^\prime}{2}\leq t\leq 1-\frac{m\varepsilon^\prime}{2})
\end{equation}
is an arc. Note that $\Gamma_{k,m}$ is regarded as a relative chain. Then, if $w>0,$ we have
\begin{equation}
\Homo^{r.d.}_{1}\Bigl(\T,(\mathcal{O}_{\Gm},\nabla(\gamma;(w,m))^\vee)\Bigr)=\bigoplus_{k=0}^{m-1}\C\Gamma_{k,m}\otimes y^\gamma.
\end{equation}
Moreover, if $\gamma\notin\Z$ and $w_1,w_2>0,$ then we have
\begin{equation}
\Homo^{r.d.}_{1}\Bigl(\T,(\mathcal{O}_{\Gm},\nabla(\gamma;(w_1,m_1),(w_2,m_2))^\vee)\Bigr)=\bigoplus_{k=0}^{m_1-1}\C\Gamma^+_{k,m_1}\otimes y^\gamma\oplus\bigoplus_{l=0}^{m_2-1}\C\Gamma^-_{l,m_2}\otimes y^\gamma.
\end{equation}

\noindent
Here, by abuse of notation we put $\Gamma^+_{k,m}=\Gamma_{k,m}$ and $\Gamma^-_{k,m}$ stands for the image of the cycle $\Gamma_{k,m}$ through the isomorphism $y\mapsto \frac{1}{y}.$ Now by \cref{lem:Kunneth}, we see that if $z_{\bar{\tau}}>0,$ $\Homo^{r.d.}_n(\mathbb{T}^n, (\mathcal{O}_{(\Gm)^n}\nabla_{\overset{\tiny \circ}{z}}^\vee))$ is decomposed as

\begin{eqnarray}
\hspace{-5mm}& &\hspace{-3mm}\Homo^{r.d.}_n\Bigl(\mathbb{T}^n, (\mathcal{O}_{(\Gm)^n}\nabla_{\overset{\tiny \circ}{z}}^\vee)\Bigr)\nonumber\\
\hspace{-5mm}&=&\hspace{-3mm}\Homo^{r.d.}_1\Bigl(\T, (\mathcal{O}_{\Gm},\nabla(c_1;(\overset{\tiny \circ}{z}_1^+,a_1^+),(\overset{\tiny \circ}{z}_1^-,a_1^-))^\vee)\Bigr)\boxtimes\dots\boxtimes\Homo^{r.d.}_1\Bigl(\T, (\mathcal{O}_{\Gm},\nabla(c_l;(\overset{\tiny \circ}{z}_l^+,a_l^+),(\overset{\tiny \circ}{z}_l^-,a_l^-))^\vee)\Bigr)\nonumber\\
\hspace{-5mm} & &\hspace{-3mm}\boxtimes\Homo^{r.d.}_1\Bigl(\T, (\mathcal{O}_{\Gm},\nabla(c_{l+1};(\overset{\tiny \circ}{z}_{l+1},a_{l+1}))^\vee)\Bigr)\boxtimes\dots\boxtimes\Homo^{r.d.}_1\Bigl(\T, (\mathcal{O}_{\Gm},\nabla(c_n;(\overset{\tiny \circ}{z}_n,a_n))^\vee)\Bigr)\label{decomposition}.\hspace{-3mm}
\end{eqnarray}
Therefore, if $c_i\notin\Z\; (i=1,\dots,l)$, we have the following

\begin{thm}\label{bthm:cycles}
Under the assumption $(*)$ and the assumption that $c_i\notin\Z\; (i=1,\dots,l),$ if $z_{\bar{\tau}}>0,$ we have

\begin{equation}
\Homo^{r.d.}_n\Bigl(\mathbb{T}^n, (\mathcal{O}_{(\Gm)^n}\nabla_{\overset{\tiny \circ}{z}}^\vee)\Bigr)=\bigoplus_{
\substack{
\varepsilon_i=\pm  \\
k_i^{\varepsilon_i}=0,\dots,a_i^{\varepsilon_i}-1 \; (i=1,\dots,l)\\
k_j^{\varepsilon_i}=0,\dots,a_j-1 \; (j=l+1,\dots,n)
}
}
\C\displaystyle\left(\prod_{i=1}^l\Gamma_{k_i^{\varepsilon_i},a_i^{\varepsilon_i}}^{\varepsilon_i}\right)\times\left(\prod_{j=l+1}^n\Gamma_{k_j,a_j}\right).
\end{equation}

\end{thm}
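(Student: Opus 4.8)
The plan is to reduce the $n$-dimensional rapid decay homology at the base point $\overset{\circ}{z}=(0,\overset{\circ}{z}_{\bar\tau})$ to the external product of the one-variable groups already recorded, and then to read off a basis. The first and only substantive step is to check that $\nabla_{\overset{\circ}{z}}$ splits as a box tensor product of one-variable connections. Since $\overset{\circ}{z}$ has vanishing $\tau$-coordinates and, by assumption $(*)$, every extra monomial $x^{{\bf a}(j)}$ with $j=n+1,\dots,N-l$ satisfies ${\bf a}(j)\in\Int\Delta\cup\bigcup_{0\in\Gamma}\Gamma$ and hence has index in $\tau$, all such terms vanish at $\overset{\circ}{z}$. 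Therefore $h_{\overset{\circ}{z}}(x)=\sum_{i=1}^l(\overset{\circ}{z}_i^+x_i^{a_i^+}+\overset{\circ}{z}_i^-x_i^{-a_i^-})+\sum_{i=l+1}^n\overset{\circ}{z}_ix_i^{a_i}$ is a sum of Laurent polynomials in the separate variables $x_1,\dots,x_n$, and $x^{\bf c}=\prod_{i=1}^n x_i^{c_i}$ factors accordingly; thus the twisted de Rham complex $(\Omega^{\bullet+n}((\Gm)^n),\nabla_{\overset{\circ}{z}})$ is the external tensor product of the $n$ one-variable complexes attached to $\nabla(c_i;(\overset{\circ}{z}_i^+,a_i^+),(\overset{\circ}{z}_i^-,a_i^-))$ for $i\le l$ and to $\nabla(c_i;(\overset{\circ}{z}_i,a_i))$ for $i>l$.

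Next I would feed this factorization into \cref{lem:Kunneth}. Because each one-variable connection has vanishing zeroth de Rham cohomology, its dual rapid decay homology is concentrated in degree $1$; hence in the K\"unneth sum $\bigoplus_{k_1+\cdots+k_n=n}\bigotimes_i\Homo^{r.d.}_{k_i}$ the only surviving summand is the one with $k_1=\cdots=k_n=1$. This is exactly the decomposition (\ref{decomposition}), which expresses $\Homo^{r.d.}_n(\mathbb{T}^n,(\mathcal{O}_{(\Gm)^n},\nabla_{\overset{\circ}{z}}^\vee))$ as the box tensor product of the $n$ first rapid decay homology groups.

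Finally I would substitute the explicit one-variable bases. The hypotheses are tailored to this: $z_{\bar\tau}>0$ gives the positivity $\overset{\circ}{z}_i^\pm>0$ and $\overset{\circ}{z}_i>0$ (the conditions $w>0$, resp.\ $w_1,w_2>0$), while $c_i\notin\Z$ for $i=1,\dots,l$ provides the non-integrality $\gamma\notin\Z$ needed for the two-term diamond factors. Thus the $i$-th factor ($i\le l$) contributes $\bigoplus_{\varepsilon_i=\pm}\bigoplus_{k_i^{\varepsilon_i}=0}^{a_i^{\varepsilon_i}-1}\C\,\Gamma^{\varepsilon_i}_{k_i^{\varepsilon_i},a_i^{\varepsilon_i}}\otimes x_i^{c_i}$ and the $j$-th factor ($l<j\le n$) contributes $\bigoplus_{k_j=0}^{a_j-1}\C\,\Gamma_{k_j,a_j}\otimes x_j^{c_j}$. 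By the characterizing property of $\varphi$ in \cref{lem:Kunneth}, a basis of the box tensor product is given by the product cycles $\bigl(\prod_{i=1}^l\Gamma^{\varepsilon_i}_{k_i^{\varepsilon_i},a_i^{\varepsilon_i}}\bigr)\times\bigl(\prod_{j=l+1}^n\Gamma_{k_j,a_j}\bigr)$, which is the asserted direct sum. The one point deserving genuine care is the first step, namely verifying that setting the $\tau$-coordinates to zero annihilates every mixed monomial so that $h_{\overset{\circ}{z}}$ genuinely separates into one-variable pieces and the de Rham complex factors as a box product; once this factorization is in place, the remainder is a formal application of \cref{lem:Kunneth}.
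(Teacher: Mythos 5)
Your proposal is correct and takes essentially the same route as the paper: the vanishing of the $\tau$-coordinates at $\overset{\circ}{z}$ makes $h_{\overset{\circ}{z}}$ separate into one-variable Laurent polynomials, \cref{lem:Kunneth} together with the vanishing of the zeroth de Rham cohomology of each factor yields the decomposition (\ref{decomposition}) concentrated in multidegree $(1,\dots,1)$, and the explicit one-variable bases (valid under $z_{\bar{\tau}}>0$ and $c_i\notin\Z$ for $i=1,\dots,l$) then give the asserted direct sum. The only difference is that you spell out the separation-of-variables step and the K\"unneth degree count, which the paper leaves implicit.
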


\noindent
Now we consider the special example treated by D. Kaminski and R. B. Paris. \cite{KP}

\begin{exa}\label{diamond}
{\rm
$n=2,$ $N=4,$ $c=^{t}(c_1,c_2)\in\mathbb{C}^2,$ $c_i\notin\mathbb{Z},$ $m_1,m_2\in\mathbb{Z}_{>0}$ such that $\frac{2}{m_1}+\frac{1}{m_2}<1,$ 
$
A=
\begin{pmatrix}
m_1 & 0 &1 &2\\
0 & m_2 &1 &1
\end{pmatrix}.
$
The constraint on $m_1$ and $m_2$ means that the lattice points ${}^t(1,1)$ and ${}^t(2,1)$ are inside the Newton polytope $\Delta=c.h.\{0, {}^t(m_1,0), {}^t(0,m_2)\}$. Now, the general solution is given by
\begin{equation}
f(z)=\int_\Gamma\exp\{ z_1x^{m_1}+z_2y^{m_2}+z_3xy+z_4x^2y\}x^{c_1-1}y^{c_2-1}dxdy.
\end{equation} 

Moreover, by \cref{bthm:cycles}, if $z_1,z_2>0,$ we know that $\Bigl\{\Gamma_{k,m_1}\times\Gamma_{l,m_2}\Bigr\}_{\substack{k=0,1,\cdots,m_1-1\\
l=0,1,\cdots,m_2-1}}$ forms a basis of rapid decay homology group.

The readers should be careful about the choice of the argument. Let us consider the integral over $\Gamma_{0,m_1}\times\Gamma_{0,m_2}.$ By the change of variables $\xi=x^{m_1},\;\eta=y^{m_2},$ and the formula

\begin{equation}
e^z=\frac{1}{2\pi\sqrt{-1}}\int_{\Gamma_{0,1}}\Gamma(s)(e^{\pi\sqrt{-1}}z)^{-s}ds,
\end{equation}
we have

\begin{align}
&f_{0,0}(z)\nonumber\\
\overset{\rm def}{=}&\int_{\Gamma_{0,m_1}\times\Gamma_{0,m_2}}\exp\{ z_1x^{m_1}+z_2y^{m_2}+z_3xy+z_4x^2y\}x^{c_1-1}y^{c_2-1}dxdy\\
 =&\frac{1}{m_1m_2}\int_{\Gamma_{0,1}\times\Gamma_{0,1}}\exp\Bigl\{ z_1\xi+z_2\eta+z_3\xi^{\frac{1}{m_1}}\eta^\frac{1}{m_2}+z_4\xi^\frac{2}{m_1}\eta^\frac{1}{m_2}\Bigr\}\xi^{\frac{c_1}{m_1}-1}\eta^{\frac{c_2}{m_2}-1}d\xi d\eta\\
 =&\frac{1}{(2\pi\sqrt{-1})^2 m_1m_2}\int_{\Gamma_{0,1}\times\Gamma_{0,1}}e^{ z_1\xi+z_2\eta}\nonumber\\
  &\Bigl\{ \int_{\Gamma_{0,m_1}\times\Gamma_{0,m_2}}\Gamma(s)\Gamma(t)\Bigl(e^{\pi\sqrt{-1}}z_3\xi^{\frac{1}{m_1}}\eta^\frac{1}{m_2}\Bigr)^{-s}\Bigl(e^{\pi\sqrt{-1}}z_4\xi^\frac{2}{m_1}\eta^\frac{1}{m_2}\Bigr)^{-t}dsdt\Bigr\}\xi^{\frac{c_1}{m_1}-1}\eta^{\frac{c_2}{m_2}-1}d\xi d\eta\\
 =&\frac{1}{(2\pi\sqrt{-1})^2 m_1m_2}\int_{\Gamma_{0,1}\times\Gamma_{0,1}}dsdt\Gamma(s)\Gamma(t)\Bigl(e^{\pi\sqrt{-1}}z_3\Bigr)^{-s}\Bigl(e^{\pi\sqrt{-1}}z_4\Bigr)^{-t}\nonumber\\
  &\int_{\Gamma_{0,1}\times\Gamma_{0,1}}e^{ z_1\xi+z_2\eta}\xi^{\frac{c_1}{m_1}-\frac{s}{m_1}-\frac{2t}{m_1}-1}\eta^{\frac{c_2}{m_2}-\frac{s}{m_2}-\frac{t}{m_2}-1}d\xi d\eta\\
 =&\frac{z_1^{-\frac{c_1}{m_1}}z_2^{-\frac{c_2}{m_2}}}{m_1m_2}\int_{\Gamma_{0,1}\times\Gamma_{0,1}}\frac{\Gamma(s)\Gamma(t)}{\Gamma(1-\frac{c_1}{m_1}+\frac{s}{m_1}+\frac{2t}{m_1})\Gamma(1-\frac{c_2}{m_2}+\frac{s}{m_2}+\frac{t}{m_2})}\nonumber\\
  &\Bigl( e^{\pi\sqrt{-1}}z_1^{-\frac{1}{m_1}}z_2^{-\frac{1}{m_2}}z_3\Bigr)^{-s}\Bigl( e^{\pi\sqrt{-1}}z_1^{-\frac{2}{m_1}}z_2^{-\frac{1}{m_2}}z_4\Bigr)^{-t}dsdt.
\end{align}

\noindent
Here, we have used the following simple

\begin{lem}\label{lemma}
Suppose $\alpha\in\mathbb{C}.$ Then we have

\begin{equation}
\int_{\Gamma_{0,1}}\xi^{\alpha-1}e^\xi d\xi=\frac{2\pi\sqrt{-1}}{\Gamma(1-\alpha)}.
\end{equation}
\end{lem}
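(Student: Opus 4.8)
The statement to prove is the Hankel-type integral formula
\begin{equation*}
\int_{\Gamma_{0,1}}\xi^{\alpha-1}e^\xi\,d\xi=\frac{2\pi\sqrt{-1}}{\Gamma(1-\alpha)}
\end{equation*}
for all $\alpha\in\C$, where $\Gamma_{0,1}$ is the contour defined earlier with $k=0,\ m=1$: explicitly, by the definition of $\Gamma_{k,m}$ with $\gamma_{0,1}=\exp\{-\pi\sqrt{-1}\}=-1$ and $\gamma_{1,1}=\exp\{\pi\sqrt{-1}\}=-1$, this is the path coming in from $-\infty$ just below the negative real axis (the ray $[\varepsilon,\infty)e^{-\varepsilon'\sqrt{-1}}\gamma_{1,1}$ traversed inward), looping counterclockwise around the origin along the small arc $C_{0,1}$, and returning to $-\infty$ just above the negative real axis. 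This is exactly the classical Hankel contour for the reciprocal Gamma function, so the plan is to reduce the claim to Hankel's well-known representation $\frac{1}{\Gamma(w)}=\frac{1}{2\pi\sqrt{-1}}\int_H e^\zeta\zeta^{-w}\,d\zeta$.

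\emph{Step 1 (identify the branch and orientation).} First I would fix the branch of $\xi^{\alpha-1}$ determined by the contour: on $\Gamma_{0,1}$ the argument of $\xi$ runs from $-\pi$ (incoming lower ray) through $0$ on the arc to $+\pi$ (outgoing upper ray), matching the standard Hankel convention. I would check that the orientation built into $\Gamma_{0,1}$ (incoming below, arc positively oriented, outgoing above) agrees with the standard Hankel contour $H$ so that no sign is introduced.

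\emph{Step 2 (convergence and reduction to a known integral).} Since $\re(e^\xi)\to0$ as $\re\xi\to-\infty$ along the two horizontal tails, the integral converges absolutely for every $\alpha\in\C$, and the integrand is entire in $\alpha$; hence both sides are entire functions of $\alpha$, and it suffices to establish the identity on any set with a limit point, e.g.\ for $\re\alpha<1$ where the classical formula is usually stated, and then invoke analytic continuation. With the substitution matching $w=1-\alpha$, the integral becomes $\int_H e^\xi\xi^{-w}\,d\xi$, which by Hankel's formula equals $2\pi\sqrt{-1}/\Gamma(w)=2\pi\sqrt{-1}/\Gamma(1-\alpha)$.

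\emph{Step 3 (main obstacle).} The only genuine subtlety is bookkeeping of the branch cut and orientation so that the factor $e^{\pi\sqrt{-1}}$ appearing in the surrounding computation is consistent with the one implicit in $\Gamma_{0,1}$; the rest is an appeal to a standard result. I expect the branch/sign verification in Step 1 to be the part requiring care, while convergence and analytic continuation in Step 2 are routine. One may alternatively give a self-contained derivation by expanding $e^\xi=\sum_{n\geq0}\xi^n/n!$, integrating each term over $\Gamma_{0,1}$ via the elementary contour integral $\int_{\Gamma_{0,1}}\xi^{\alpha+n-1}\,d\xi=\frac{2\pi\sqrt{-1}}{\Gamma(1-\alpha-n)}$ (itself Hankel's formula for the integer-shifted argument), and summing using the reflection-type relation $\frac{1}{\Gamma(1-\alpha-n)}=\frac{(-1)^n(1-\alpha)_n}{\Gamma(1-\alpha)}$; this bypasses any direct appeal to the named theorem but amounts to the same computation.
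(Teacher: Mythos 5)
Your main argument (Steps 1--2) is correct and is essentially the paper's own treatment: the paper gives no proof of Lemma \ref{lemma}, presenting it as a ``simple'' classical fact and recording only the branch normalization (``the argument of $\xi$ is from $-\pi$ to $\pi$''), which is exactly your Step 1. Identifying $\Gamma_{0,1}$ as the standard counterclockwise Hankel contour, fixing the principal branch, and invoking Hankel's representation $1/\Gamma(w)=\frac{1}{2\pi\ii}\int_H e^{\zeta}\zeta^{-w}d\zeta$ with $w=1-\alpha$, plus the entirety-and-identity-theorem remark, settles the claim. (One small slip in your Step 1: the incoming lower ray is the term $-[\varepsilon,\infty)e^{\varepsilon'\ii}\gamma_{0,1}$, while $[\varepsilon,\infty)e^{-\varepsilon'\ii}\gamma_{1,1}$ is the outgoing upper ray; this mislabeling is harmless.)

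However, the ``alternative self-contained derivation'' sketched at the end of Step 3 is genuinely broken and should be deleted or replaced. Term-by-term integration of $e^{\xi}=\sum_{n\geq 0}\xi^{n}/n!$ over $\Gamma_{0,1}$ is not available: the contour is non-compact, and for every fixed $\alpha$ the individual integrals $\int_{\Gamma_{0,1}}\xi^{\alpha+n-1}d\xi$ diverge for all large $n$, since the tails contribute $\int^{\infty}t^{\re\alpha+n-1}dt$ --- it is precisely the discarded factor $e^{\xi}$ that makes the original integral converge. Worse, the claimed evaluation is false even in the range where the pure-power integral does converge: for $\re(\alpha+n)<0$ the integrand has the single-valued primitive $\xi^{\alpha+n}/(\alpha+n)$ on the sector $|\arg\xi|\leq\pi-\varepsilon'$ containing the contour, and both endpoints lie at infinity where the primitive tends to $0$, so
\begin{equation*}
\int_{\Gamma_{0,1}}\xi^{\alpha+n-1}d\xi=0\neq\frac{2\pi\ii}{\Gamma(1-\alpha-n)}.
\end{equation*}
(Also, the Pochhammer identity you quote should read $1/\Gamma(1-\alpha-n)=(-1)^{n}(\alpha)_{n}/\Gamma(1-\alpha)$ with $(\alpha)_{n}=\alpha(\alpha+1)\cdots(\alpha+n-1)$, not $(1-\alpha)_{n}$.) If you want a self-contained proof rather than a citation, the standard route is the one your Step 2 gestures at: for $\re\alpha>0$ let $\varepsilon,\varepsilon'\to0$, collapsing $\Gamma_{0,1}$ onto the two edges of the cut; the arc term vanishes and the edges give $(e^{\pi\ii\alpha}-e^{-\pi\ii\alpha})\int_{0}^{\infty}e^{-t}t^{\alpha-1}dt=2\ii\sin(\pi\alpha)\Gamma(\alpha)=2\pi\ii/\Gamma(1-\alpha)$ by the reflection formula, and then analytic continuation in $\alpha$ finishes the proof for all $\alpha\in\C$.
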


\noindent
Note that in the lemma above, the argument of $\xi$ is from $-\pi$ to $\pi$. If we take another integration contour, say $\Gamma_{k,1}$, we have
\begin{equation}
\int_{\Gamma_{k,1}}\xi^{\alpha-1}e^{z\xi} d\xi=(e^{-2k\pi\sqrt{-1}}z)^{-\alpha}\frac{2\pi\sqrt{-1}}{\Gamma(1-\alpha)}
\end{equation}
for $z>0$. Thus, we can conclude that
\begin{eqnarray*}
f_{k,l}(z)&\overset{\rm def}{=}&\int_{\Gamma_{k,m_1}\times\Gamma_{l,m_2}}\exp\{ z_1x^{m_1}+z_2y^{m_2}+z_3xy+z_4x^2y\}x^{c_1-1}y^{c_2-1}dxdy\\
 &=&f_{0,0}(e^{-2k\pi\sqrt{-1}}z_1,e^{-2l\pi\sqrt{-1}}z_2,z_3,z_4).
\end{eqnarray*}
We can confirm that Mellin-Barnes integral representation of $f$ is convergent for $z_1z_2z_3z_4\neq 0$ as we shall see in Section \ref{convergence}.  From the last formula, we can easily give its series representation
\begin{equation}
f_{0,0}=\frac{(2\pi\sqrt{-1})^2}{m_1m_2}z_1^{-\frac{c_1}{m_1}}z_2^{-\frac{c_2}{m_2}}\displaystyle\sum_{i,j=0}^\infty \frac{\Bigl( z_1^{-\frac{1}{m_1}}z_2^{-\frac{1}{m_2}}z_3\Bigr)^{i}\Bigl( z_1^{-\frac{2}{m_1}}z_2^{-\frac{1}{m_2}}z_4\Bigr)^{j}}{\Gamma(1-\frac{c_1}{m_1}-\frac{i}{m_1}-\frac{2j}{m_1})\Gamma(1-\frac{c_2}{m_2}-\frac{i}{m_2}-\frac{j}{m_2})i!j!}.
\end{equation}



}
\end{exa}

\section{Deduction of Mellin-Barnes integral representations}

\indent
In this section, we give a psychological deduction of Mellin-Barnes integral representations. Though it will turn out that Mellin-Barnes integral can be mathematically justified, the convergence of the corresponding Laplace type integral is a more subtle problem. Let us take any $\sigma\subset\{ 1,\cdots ,N \}$ such that $|\sigma|=n$, $\det A_\sigma\neq 0$,  and $\sigma\subset\displaystyle\bigcup_{\substack{\Gamma<\Delta_A\\ 0\notin\Gamma}}\Gamma.$ We consider a covering transform $\xi_i=x^{{\bf a}(i)}\; (i\in\sigma).$ We abbreviate this covering transform as $\xi=x^{A_\sigma}$. We have
\begin{equation}
d\xi_j=\displaystyle\sum_{i=1}^na_{ij}x^{{\bf a}(j)-{\bf e}(i)}dx_i.
\end{equation}
Therefore, 
\begin{equation}
d\xi_\sigma=\displaystyle\bigwedge_{i\in\sigma}d\xi_i=\det (A_\sigma)x^{A_\sigma{\bf 1}-{\bf 1}}dx
\end{equation}
which implies 
\begin{equation}
dx=\frac{1}{\det (A_\sigma)}\xi_\sigma^{A_\sigma^{-1}{\bf 1}-{\bf 1}}d\xi_\sigma.
\end{equation}

Let us formally construct integration cycles. Consider a cycle $\Gamma_0=\overset{n \text{ times}}{\overbrace{\Gamma_{0,1}\times\cdots\times\Gamma_{0,1}}}$ in $\xi$ space, on which all the branches of $\xi_\sigma^{A_\sigma^{-1}{\bf a}(i)}$ takes its principal value. Now consider a covering map 
\begin{equation}
p:\; \mathbb{T}^n_x\ni x\mapsto\xi_\sigma=x^{A_\sigma}\in\mathbb{T}^n,
\end{equation}
Suppose that we can take a rapid decay cycle $\Gamma\in\Homo_{n}^{r.d.}\left(\T^n,(\mathcal{O}_{(\Gm)^n},\nabla_z^\vee)\right)$ such that $p_*\Gamma=\Gamma_0$. On this contour, the dominant behaviour of $h_z(x)$ is given by $\displaystyle\sum_{i\in\sigma}z_ix^{{\bf a}(i)}$. By the construction of $\Gamma$, we formally have

\begin{eqnarray}
f_0(z)&\overset{\rm def}{=}&\frac{1}{(2\pi\sqrt{-1})^n}\int_\Gamma e^{h_z(x)}x^{\bf c-1}dx\\
 &=&\frac{1}{(2\pi\sqrt{-1})^n}\int_{\Gamma_0}e^{h_z(\xi_\sigma^{A_\sigma^{-1}})}\xi_\sigma^{A_\sigma^{-1}{\bf c}-{\bf 1}}\frac{d\xi_\sigma}{\det A_\sigma}\label{Laplace}\\
 &=&\frac{1}{(2\pi\sqrt{-1})^N}\int_{\overset{N-n \text{ times}}{\overbrace{\Gamma_{0,1}\times\cdots\times\Gamma_{0,1}}}}\Gamma(s_{\bar{\sigma}})\biggl(e^{\pi\sqrt{-1}{\bf 1}_{\bar{\sigma}}}z_{\bar{\sigma}}\biggr)^{-s_{\bar{\sigma}}}\times\nonumber\\
& &\int_{\overset{n\text{ times}}{\overbrace{\Gamma_{0,1}\times\cdots\times\Gamma_{0,1}}}}\xi_\sigma^{A_\sigma^{-1}(c-A_{\bar{\sigma}}s_{\bar{\sigma}})-{\bf 1}}\exp\{\displaystyle\sum_{i\in\sigma}z_i\xi_i\}\frac{d\xi_\sigma}{\det A_\sigma}ds_{\bar{\sigma}}\\
 &=&\frac{z_\sigma^{-A_\sigma^{-1}c}}{(2\pi\sqrt{-1})^{N-n}}\int_{\overset{N-n \text{ times}}{\overbrace{\Gamma_{0,1}\times\cdots\times\Gamma_{0,1}}}}\frac{\Gamma(s_{\overline{\sigma}})(e^{\pi\sqrt{-1}{\bf 1}_{\bar{\sigma}}}z_{\overline{\sigma}})^{-s_{\overline{\sigma}}}z_\sigma^{A_\sigma^{-1}A_{\overline{\sigma}}s_{\overline{\sigma}}}}{\Gamma({\bf 1}-A_\sigma^{-1}(c-A_{\overline{\sigma}}s_{\overline{\sigma}}))}\frac{ds_{\overline{\sigma}}}{\det A_\sigma}.\label{MBint}
\end{eqnarray}

\noindent
We used the Lemma \ref{lemma} above. We call (\ref{MBint}) Mellin-Barnes integral representation.

\begin{rem}
We will see that (\ref{MBint}) converges in the following sections, whereas (\ref{Laplace}) does not need to converge. As a simple example, let us put $n=2$, $N=3$, generic $c$ and 
$
A=
\begin{pmatrix}
1& 0 &-1\\
0& 1 &2
\end{pmatrix}.
$
We take $\sigma=\{1,2\}.$ Then, we have $\xi_1=x_1$ and $\xi_2=x_2.$ Our integration contour is nothing but $\Gamma_0=\Gamma_{0,1}\times\Gamma_{0,1}.$ However, we can easily see that (\ref{Laplace}) in this case can never be convergent for any $(z_1,z_2,z_3)\in(\C^*)^3$ since $e^{z_3x_1^{-1}x_2^2}$ is not bounded on $\Gamma_0.$

\end{rem}

\section{Mellin-Barnes integral satisfies GKZ}\label{equation}
Take any $\sigma\subset\{ 1,\cdots ,N \}$ such that $|\sigma|=n$, $\det A_\sigma\neq 0$, and $\sigma\subset\displaystyle\bigcup_{\substack{\Gamma<\Delta_A\\ 0\notin\Gamma}}\Gamma$. The last condition is equivalent to the condition that for any $j\in\bs$, $|A_\s^{-1}{\bf a}(j)|\leq 1$.

\noindent
We put
\begin{equation}
F_{\s,0}(z)=\frac{z_\sigma^{-A_\sigma^{-1}c}}{(2\pi\sqrt{-1})^{N-n}}\int_{C_{\overline{\sigma}}}\frac{\Gamma(s_{\overline{\sigma}})(e^{\pi\sqrt{-1}}z_{\overline{\sigma}})^{-s_{\overline{\sigma}}}z_\sigma^{A_\sigma^{-1}A_{\overline{\sigma}}s_{\overline{\sigma}}}}{\Gamma({\bf 1}-A_\sigma^{-1}(c-A_{\overline{\sigma}}s_{\overline{\sigma}}))}ds_{\overline{\sigma}}.
\end{equation}
\noindent
Here, $C_{\overline{\sigma}}$ is a product of 1 dimensional integration contours $\overset{N-n \text{ times}}{\overbrace{\Gamma_{0,1}\times\cdots\times\Gamma_{0,1}}}$. For any $l\in\Z_{>0}$ vector ${\bf v}\in\mathbb{C}^l$, we put $|{\bf v}|=\sum_{j=1}^lv_j.$ We   put $s_j=|A_\sigma^{-1}{\bf a}(j)|\leq 1(j\notin\sigma).$ In this section, we show that this function satisfies $M_A(c)$ assuming that the integral is convergent. The first equation is very easy to check. Indeed, for any $t=(t_1,\cdots ,t_n)\in(\mathbb{C}^{*})^n$, one has
\begin{align}
&F_{\s,0}(t^{{\bf a}(1)}z_1,\cdots ,t^{{\bf a}(N)}z_N)\nonumber\\
=&\frac{t^{-c}z_\sigma^{-A_\sigma^{-1}c}}{(2\pi\sqrt{-1})^{N-n}}\int_{C_{\overline{\sigma}}}\frac{\Gamma(s_{\overline{\sigma}})(e^{\pi\sqrt{-1}}z_{\overline{\sigma}})^{-s_{\overline{\sigma}}}t^{-A_{\barsigma} s_{\barsigma}}t^{A_\sigma A_\sigma^{-1}A_{\barsigma}s_{\barsigma}}z_\sigma^{A_\sigma^{-1}A_{\overline{\sigma}}s_{\overline{\sigma}}}}{\Gamma({\bf 1}-A_\sigma^{-1}(c-A_{\overline{\sigma}}s_{\overline{\sigma}}))}ds_{\overline{\sigma}}\\
=&t^{-c}F(z_1,\cdots ,z_N).
\end{align}

Therefore, by applying $\frac{\partial}{\partial t_i}|_{t_i=1}$, we obtain $E_iF(z)=0.$

Next we check the second equation. Take any $u=u_+-u_-\in L_A$ and decompose it as $u=u^\sigma+u^{\barsigma}$ and we decompose it further as $u^{\sigma}=u^{\sigma}_+-u^\sigma_-$, $u^{\barsigma}=u^{\barsigma}_+-u^{\barsigma}_-.$ Note that $u^\sigma=-A_\sigma^{-1}A_{\barsigma}u^{\barsigma}$ implies $A_\sigma^{-1}A_{\barsigma}u^{\barsigma}_++u^\sigma_+=A_\sigma^{-1}A_{\barsigma}u^{\barsigma}_-+u^\sigma_-$.

We define some notations as follows:
$[a]_u=\prod_{j=1}^u(a-j+1)=\frac{\Gamma(a+1)}{\Gamma(a-u+1)} (a\in\mathbb{C}, u\in\mathbb{Z}_{\geq 1}),$ $[a]_0=1$, $[{\bf a}]_{{\bf u}}=\prod_{i=1}^l[a_i]_{u_i} ({\bf a}\in\mathbb{C}^l, u\in\mathbb{Z}_{\geq 1}^l).$

Then, 
\begin{align}
 &(2\pi\sqrt{-1})^{N-n}\partial^{u_+}F_{\s,0}(z)\nonumber\\
= &\int_{C_{\overline{\sigma}}}\frac{\Gamma(s_{\overline{\sigma}})[A^{-1}_\sigma(A_{\barsigma}s_{\barsigma}-c)]_{u^\sigma_+}[s_{\barsigma}+u^{\barsigma}_+-{\bf 1}]_{u^{\barsigma}_+}}{\Gamma({\bf 1}-A_\sigma^{-1}(c-A_{\overline{\sigma}}s_{\overline{\sigma}}))}(e^{\pi\sqrt{-1}{\bf 1}_{\barsigma}}z_{\overline{\sigma}})^{-s_{\overline{\sigma}}-u^{\barsigma}_+}z_\sigma^{A_\sigma^{-1}(A_{\overline{\sigma}}s_{\overline{\sigma}}-c)-u^\sigma_+}ds_{\overline{\sigma}}\\
=&\int_{C_{\overline{\sigma}}+u^{\barsigma}_+}\frac{\Gamma(t_{\overline{\sigma}}-u^{\barsigma}_+)[A^{-1}_\sigma(A_{\barsigma}(t_{\barsigma}-u_+^{\barsigma})-c)]_{u^\sigma_+}[t_{\barsigma}-{\bf 1}]_{u^{\barsigma}_+}}{\Gamma({\bf 1}-A_\sigma^{-1}(c-A_{\overline{\sigma}}s_{\overline{\sigma}}))}(e^{\pi\sqrt{-1}{\bf 1}_{\barsigma}}z_{\overline{\sigma}})^{-t_{\overline{\sigma}}}\nonumber\\
  & \times z_\sigma^{A_\sigma^{-1}(A_{\overline{\sigma}}t_{\overline{\sigma}}-c)-A^{-1}_\sigma A_{\barsigma}u^{\barsigma}_+-u^\sigma_+}dt_{\overline{\sigma}}\\
=&\int_{C_{\overline{\sigma}}+u^{\barsigma}_+}\frac{\Gamma(t_{\overline{\sigma}}-u^{\barsigma}_+)\Gamma({\bf 1}+A^{-1}_\sigma(A_{\barsigma}(t_{\barsigma}-u_+^{\barsigma})-c))\Gamma(t_{\barsigma})}{\Gamma({\bf 1}-A_\sigma^{-1}(c-A_{\overline{\sigma}}s_{\overline{\sigma}}))\Gamma({\bf 1}+A^{-1}_\sigma(A_{\barsigma}(t_{\barsigma}-u_+^{\barsigma})-c)-u_+^{\sigma})\Gamma(t_{\barsigma}-u_+^{\barsigma})}(e^{\pi\sqrt{-1}{\bf 1}_{\barsigma}}z_{\overline{\sigma}})^{-t_{\overline{\sigma}}}\nonumber\\
  &\times z_\sigma^{A_\sigma^{-1}(A_{\overline{\sigma}}t_{\overline{\sigma}}-c)-A^{-1}_\sigma A_{\barsigma}u^{\barsigma}_+-u^\sigma_+}dt_{\overline{\sigma}}\\
=&\int_{C_{\overline{\sigma}}+u^{\barsigma}_+}\frac{\Gamma(t_{\barsigma})}{\Gamma({\bf 1}+A^{-1}_\sigma(A_{\barsigma}t_{\barsigma}-c)-A_{\barsigma}u^{\barsigma}_+-u_+^{\sigma})}(e^{\pi\sqrt{-1}{\bf 1}_{\barsigma}}z_{\overline{\sigma}})^{-t_{\overline{\sigma}}}z_\sigma^{A_\sigma^{-1}(A_{\overline{\sigma}}t_{\overline{\sigma}}-c)-A^{-1}_\sigma A_{\barsigma}u^{\barsigma}_+-u^\sigma_+}dt_{\overline{\sigma}}\\
=&(2\pi\sqrt{-1})^{N-n}\partial^{u_-}F(z)
\end{align}

where we put $t_{\barsigma}=s_{\barsigma}+u_+^{\barsigma}.$ The last equality follows from the homotopy of integration domain. 

It is important to notice that Mellin-Barnes integral representation tells us that the solution is holomorphic (meromorphic) with respect to parameters $c.$ Thus, the integral remains to be the solution of GKZ system as long as the parameters $c$ lie in ``non-singular locus''.

\begin{defn}
We say $c$ is very generic when $A_\sigma^{-1}(c+A_{\bar{\sigma}}{\bf m})$ does not contain any integer entry for any ${\bf m}\in\mathbb{Z}_{\geq 0}^{\bar{\sigma}}.$
\end{defn}

In the followings, we always assume that $c$ is very generic. This ensures that the resulting $\Gamma$ series are all non-zero.

\section{Mellin-Barnes integral is convergent}\label{convergence}
\indent
First, we introduce the following key formula for multi-Beta following \cite{B}.

\begin{lem}\label{blem:multibeta}
Let $\Delta^k\subset\mathbb{R}^k$ be the k simplex $\Delta^k=\{(x_1,\cdots ,x_k)\in\mathbb{R}^k|x_1,\cdots,x_k\geq 0, \sum_{j=1}^kx_j\leq 1\}$ and $\alpha_1,\cdots ,\alpha_{k+1}\in\mathbb{C}$. If we denote by $P_k$ the Pochhammer cycle associated to $\Delta^k$ in the sense of \cite{B}, we have
\begin{align}
\frac{\Gamma(\alpha_1)\cdots\Gamma(\alpha_{k+1})}{\Gamma(\alpha_1+\cdots +\alpha_{k+1})}&=\int_{\Delta^k}t_1^{\alpha_1-1}\cdots t_{k}^{\alpha_k-1}(1-t_1\cdots -t_k)^{\alpha_{k+1}-1}dt_1\cdots dt_k\\
&=\prod_{j=1}^{k+1}\frac{1}{(1-e^{-2\pi\sqrt{-1}\alpha_j})}\int_{P_k}t_1^{\alpha_1-1}\cdots t_{k}^{\alpha_k-1}(1-t_1\cdots -t_k)^{\alpha_{k+1}-1}dt_1\cdots dt_k.
\end{align}
The first equality is valid if $\Re(\alpha_j)>0$. The second equality is valid if $\alpha\notin\Z$.
\end{lem}

Let $(A_\sigma^{-1}c)_l$ denote the $l$-th entry of the vector  $A_\sigma^{-1}c$. Since $c$ is weakly very generic, we have $A_\sigma^{-1}c\notin\mathbb{Z}.$ Let me put $(A_\sigma^{-1}A_{\barsigma}s_{\barsigma})_l=\sum_{i\notin\sigma}\alpha_{li}s_i.$ Then, we have 
\begin{align}
&\frac{\Gamma(s_{\barsigma})}{\Gamma({\bf 1}-A_\sigma^{-1}(c-A_{\barsigma}s_{\barsigma}))}\nonumber\\
=&\prod_{l=1}^n\frac{(\prod_{i\notin\sigma}\Gamma(\alpha_{li}s_i))\Gamma(1-(A_\sigma^{-1}c)_l)}{\Gamma(1-(A_\sigma^{-1}c)_l+\sum_{i\notin\sigma}\alpha_{li}s_i)}\prod_{i\notin\sigma}\frac{\Gamma(s_i)}{\prod_{l=1}^n\Gamma(\alpha_{li}s_i)}\prod_{l=1}^n\frac{1}{\Gamma(1-(A_\sigma^{-1}c)_l)}.
\end{align}

\noindent
By the \cref{blem:multibeta},
\begin{align}
 &\prod_{l=1}^n\frac{(\prod_{i\notin\sigma}\Gamma(\alpha_{li}s_i))\Gamma(1-(A_\sigma^{-1}c)_l)}{\Gamma(1-(A_\sigma^{-1}c)_l+\sum_{i\notin\sigma}\alpha_{li}s_i)}\nonumber\\
=&\frac{1}{(1-e^{2\pi\sqrt{-1}(A_\sigma^{-1}c)_l})}\prod_{i\notin\sigma}\frac{1}{1-e^{-2\pi\sqrt{-1}\alpha_{li}s_i}}\int_{P_{\barsigma}}\prod_{i\notin\sigma}t_i^{\alpha_{li}s_i-1}(1-\sum_{i\notin\sigma}t_i)^{-(A_\sigma^{-1}c)_l}\prod_{i\notin\sigma}dt_i
\end{align}
Observe now that the variables $t_i$ are bounded from both below and above. Taking into account the integration contour, we have the following fundamental estimate on our integration contour:

\begin{equation}
\left|\prod_{l=1}^n\frac{(\prod_{i\notin\sigma}\Gamma(\alpha_{li}s_i))\Gamma(1-(A_\sigma^{-1}c)_l)}{\Gamma(1-(A_\sigma^{-1}c)_l+\sum_{i\notin\sigma}\alpha_{li}s_i)}\right|\leq C\cdot\prod_{i\notin\sigma}A_{li}^{|\alpha_{li}|\cdot |\Re(s_i)|},
\end{equation}

\noindent
where $C$ and $A_{li}$ are some constants. Now we would like to estimate the second term:
\begin{equation}
\prod_{i\notin\sigma}\frac{\Gamma(s_i)}{\prod_{l=1}^n\Gamma(\alpha_{li}s_i)}.
\end{equation}
Remember the famous Stirling's formula (c.f. \cite{KP}.)

\begin{equation}
\Gamma(s)\sim \sqrt{2\pi}e^{-s}s^{s-\frac{1}{2}} (s\rightarrow\infty,\; |arg(s)|<\pi).
\end{equation}
Since the inversion formula tells us that 
\begin{equation}
\Gamma(-s)=\frac{-\pi}{\Gamma(s+1)sin(\pi s)},
\end{equation}
the asymptotic behaviour of $\Gamma(-s)$ is
\begin{equation}
\Gamma(-s)\sim\Re(s)-(\Re(s)+\frac{1}{2})\log|s|+O(1)\;\;(-s\rightarrow\infty\text{ along our integration contour}).
\end{equation}
Therefore, the leading asymptotic behaviour is given by
\begin{equation}
\left|\prod_{i\notin\sigma}\frac{\Gamma(s_i)}{\prod_{l=1}^n\Gamma(\alpha_{li}s_i)}\right|\sim C\cdot |s_i|^{(1-\sum_l\alpha_{li})\Re(s_i)}e^{-(1-\sum_l\alpha_{li})\Re(s_i)}|s_i|^{power}\;\; (\Re(s_i)\rightarrow -\infty\text{ along our contour}).
\end{equation}

\noindent
When $\sum_l\alpha_{li}<1,$ this estimate implies it decays faster than any exponential.
On the other hand, when $\sum_l\alpha_{li}=1,$ the growth is at most of polynomial order.

Summing up, we have the following convergence result:

\begin{thm}
Put $H_\sigma=\{y\in\mathbb{R}^n;|A_\sigma^{-1}y|=1\}$ and \\
$U_\sigma=\{ z\in\mathbb{C}^N|\prod_{j\in\sigma}z_j\neq 0,|z_i|<R_i|z_\sigma^{A_\sigma^{-1}a(i)}|,\;\forall a(i)\in H_\sigma\setminus\sigma\}$, where $R_i$ is a small positive number. Then, $F(z)$ is convergent in $U_\sigma$.
\end{thm}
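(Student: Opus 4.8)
The goal is a convergence statement for the Mellin--Barnes integral $F_{\sigma,0}(z)$ on the open set $U_\sigma$. The integrand splits, as the preceding computation in this section shows, into three pieces: a multi-Beta factor estimated via Stirling and the Pochhammer-cycle formula of \cref{blem:multibeta}, the ratio $\prod_{i\notin\sigma}\Gamma(s_i)/\prod_l\Gamma(\alpha_{li}s_i)$ governing the Gamma-function decay, and the power-of-$z$ factor $(e^{\pi\sqrt{-1}}z_{\bar\sigma})^{-s_{\bar\sigma}}z_\sigma^{A_\sigma^{-1}A_{\bar\sigma}s_{\bar\sigma}}$ governing the $z$-dependence. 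The strategy is to bound the modulus of the whole integrand on the product contour $C_{\bar\sigma}$ by an integrable function, uniformly for $z$ in any compact subset of $U_\sigma$, and then invoke dominated convergence (Fubini applied factor by factor over $i\notin\sigma$).

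**Key steps, in order.** First I would reduce to a single variable $s_i$ at a time, since the contour $C_{\bar\sigma}$ is a product $\Gamma_{0,1}\times\cdots\times\Gamma_{0,1}$ and the Gamma-ratio and $z$-power factors both factor over $i\notin\sigma$; only the multi-Beta factor couples the variables, and for that one I already have the bound $C\cdot\prod_{i\notin\sigma}A_{li}^{|\alpha_{li}|\,|\re(s_i)|}$ established just above, which is a harmless exponential in $\re(s_i)$. Second, I would combine the two exponential-type estimates already derived: along the Barnes-type contour one has $\re(s_i)\to-\infty$, and the Gamma-ratio was shown to decay faster than any exponential when $\sum_l\alpha_{li}=s_i<1$ and to grow at most polynomially when $s_i=1$; the multi-Beta factor contributes only the tame exponential above. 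Third, and this is the heart of the matter, I would estimate the $z$-power factor: writing $s_i=\rho_i+\sqrt{-1}\tau_i$ with $\rho_i$ large negative, the modulus of $(e^{\pi\sqrt{-1}}z_{i})^{-s_i}z_\sigma^{(A_\sigma^{-1}A_{\bar\sigma}s_{\bar\sigma})_i}$ is of the form $\bigl(R_i^{-1}|z_i|\,|z_\sigma^{-A_\sigma^{-1}{\bf a}(i)}|\bigr)^{-\re(s_i)}$ up to constants depending on $\tau_i$ and on the imaginary parts of the exponents. The definition of $U_\sigma$ is engineered precisely so that $|z_i|<R_i|z_\sigma^{A_\sigma^{-1}{\bf a}(i)}|$ for those $i$ with ${\bf a}(i)\in H_\sigma$, i.e.\ with $s_i=|A_\sigma^{-1}{\bf a}(i)|=1$; for these indices the base of the exponential is strictly less than $1$, so that $\re(s_i)\to-\infty$ forces exponential decay in $z$, which dominates the at-most-polynomial growth of the Gamma-ratio. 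For the indices with $s_i<1$ the Gamma-ratio already decays super-exponentially, so no constraint on $z$ is needed there. Finally, I would combine these into a single integrable majorant and conclude convergence, uniform on compacta of $U_\sigma$, so that $F(z)$ is in fact holomorphic there.

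**The main obstacle.** The delicate point is the boundary regime $\re(s_i)\to-\infty$ where the Gamma-ratio grows polynomially rather than decaying, namely the indices with $s_i=1$; these are exactly the lattice points ${\bf a}(i)$ lying on the hyperplane $H_\sigma$. Here convergence is not automatic from the Gamma factors alone and must be rescued entirely by the $z$-power factor, which is why the defining inequalities of $U_\sigma$ are imposed only for ${\bf a}(i)\in H_\sigma\setminus\sigma$. I must verify carefully that the exponential smallness $\bigl(R_i^{-1}|z_i||z_\sigma^{A_\sigma^{-1}{\bf a}(i)}|^{-1}\bigr)^{-\re(s_i)}$, with base below $1$, genuinely beats the polynomial factor $|s_i|^{\mathrm{power}}$ and the benign exponential from the multi-Beta bound, along the actual contour where $\arg(s_i)$ stays bounded away from $\pm\pi$; keeping track of the oscillatory imaginary-part contributions and of the constants $R_i$, $A_{li}$ uniformly is the bookkeeping that makes this step the crux of the argument.
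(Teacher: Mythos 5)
Your proposal follows essentially the same route as the paper's own argument: the same splitting of the integrand via the multi-Beta identity of \cref{blem:multibeta} (bounded on the Pochhammer cycle by the harmless exponential $C\prod_i A_{li}^{|\alpha_{li}||\re(s_i)|}$), and the same Stirling dichotomy between indices with $|A_\sigma^{-1}{\bf a}(i)|<1$ (super-exponential decay, no condition on $z$) and indices on $H_\sigma$ (at most polynomial growth, rescued by the $z$-power factor under the defining inequalities of $U_\sigma$ with $R_i$ small). The paper compresses the last step---combining these bounds with the $z$-power estimate---into the words ``summing up,'' so your explicit treatment of that step, including the bookkeeping of $R_i$ against $A_{li}$, is a faithful completion of the same proof rather than a different one.
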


\section{Mellin-Barnes integral produces series solutions}\label{series}

\indent
In this section, we relate Mellin-Barnes integral to the so-called $\Gamma$ series. We follow the brilliant argument and notations of M.-C. Fern\'andez-Fern\'andez \cite{FF}. We renumber ${1,\cdots ,N}$ so that $\bar{\sigma}={1,\cdots,N-n}.$ Then, we can prove the following formula by a simple induction:

\begin{align}
  &F_{\s,0}(z)\\
=&\frac{z_\sigma^{-A_\sigma^{-1}c}}{(2\pi\sqrt{-1})^{N-n}}(\int_{(\Gamma_{0,1}-M)\times \Gamma_{0,1}\times\cdots\times \Gamma_{0,1}}+2\pi\sqrt{-1}\int_{L_M\times (\Gamma_{0,1}-M)\times \Gamma_{0,1}\times\cdots\times \Gamma_{0,1}}\\
 &+\cdots+(2\pi\sqrt{-1})^{N-n-1}\int_{L_M\times\cdots\times L_M\times (\Gamma_{0,1}-M)})\frac{\Gamma(s_{\overline{\sigma}})(e^{\pi\sqrt{-1}}z_{\overline{\sigma}})^{-s_{\overline{\sigma}}}z_\sigma^{A_\sigma^{-1}A_{\overline{\sigma}}s_{\overline{\sigma}}}}{\Gamma({\bf 1}-A_\sigma^{-1}(c-A_{\overline{\sigma}}s_{\overline{\sigma}}))}ds_{\overline{\sigma}}\\
 &+\displaystyle\sum_{m_1,\cdots, m_{N-n}=0}^{M-1}\frac{z_\sigma^{-A_\sigma^{-1}(c+A_{\overline{\sigma}}{\bf m})}z_{\overline{\sigma}}^{\bf m}}{\Gamma({\bf 1}-A_\sigma^{-1}(c+A_{\overline{\sigma}}{\bf m})){\bf m}!}.
\end{align}

\noindent
Here, $L_M$ is a positive loop around poles $0,-1,\cdots ,-M+1.$ When we fix $z\in U_\sigma$, we have 

\begin{equation}
|integrand|\leq C\prod_{i\in\overline{\sigma}}e^{-A_i|\Re(s_i)|}
\end{equation}

\noindent
uniformly on the integration contours if we are suitably away from spirals $\alpha_{li}^{-1}\mathbb{Z}.$ Here, $A_i$ is a positive constant. We can conclude that the reminder terms decay fast as $M$ tends to $\infty.$ Now let me introduce some notation following M.-C. Fern\'anez-Fern\'andez \cite{FF}. For any ${\bf k}\in\mathbb{Z}_{\geq 0}^{\overline{\sigma}}$, we put
\begin{equation}
\Lambda_{\bf k}=\{{\bf k+m}\in\mathbb{Z}_{\geq 0}^{\overline{\sigma}}|A_{\overline{\sigma}}{\bf m}\in\mathbb{Z}A_\sigma\}.
\end{equation}
We put $r=|\mathbb{Z}^{n\times 1}/\mathbb{Z}A_\sigma|.$ If we write $\mathbb{Z}^{n\times 1}/\mathbb{Z}A_\sigma=\{[A_{\overline{\sigma}}{\bf k}(j)]\}_{j=1}^r,$ we have a decomposition
\begin{equation}\label{disjoint}
\mathbb{Z}_{\geq 0}^{\overline{\sigma}}=\displaystyle \bigsqcup_{j=1}^{r}\Lambda_{{\bf k}(j)}.\end{equation}
We put
\begin{equation}
\varphi_{v^{\bf k}}(z)=z_\sigma^{-A_\sigma^{-1}c}\displaystyle\sum_{{\bf k+m}\in\Lambda_{\bf k}}\frac{z_\sigma^{-A_\sigma^{-1}A_{\overline{\sigma}}({\bf k+m})}z_{\overline{\sigma}}^{\bf k+m}}{\Gamma({\bf 1}-A_\sigma^{-1}(c+A_{\overline{\sigma}}({\bf k+m})))({\bf k+m})!}.
\end{equation}
By (\ref{disjoint}), we have 

\begin{prop}(\cite{FF})
Suppose $c$ is very generic. Then, $\{ \varphi_{v^{{\bf k}(j)}}(z)\}_{j=1}^r$ is a linearly independent set of solutions of $M_A(c).$
\end{prop}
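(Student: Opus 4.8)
The plan is to prove linear independence by exploiting the disjoint decomposition (\ref{disjoint}) of $\mathbb{Z}_{\geq 0}^{\bar{\sigma}}$, which shows that the $r$ series $\varphi_{v^{\mathbf{k}(j)}}(z)$ have pairwise disjoint supports as formal power series in the monomials $z_\sigma^{-A_\sigma^{-1}A_{\bar\sigma}\mathbf{n}}z_{\bar\sigma}^{\mathbf{n}}$ (for $\mathbf{n}\in\mathbb{Z}_{\geq 0}^{\bar\sigma}$). First I would make this precise: each $\varphi_{v^{\mathbf{k}(j)}}$ is a sum over the coset $\Lambda_{\mathbf{k}(j)}$, and since these cosets partition $\mathbb{Z}_{\geq 0}^{\bar\sigma}$, no monomial appears in two different $\varphi_{v^{\mathbf{k}(j)}}$. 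Hence a relation $\sum_{j=1}^r \lambda_j \varphi_{v^{\mathbf{k}(j)}}(z)=0$ forces, for each $j$, that the entire series $\lambda_j\varphi_{v^{\mathbf{k}(j)}}$ vanishes identically.

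The second step is to rule out the degenerate possibility that some $\varphi_{v^{\mathbf{k}(j)}}$ is identically zero even though $\lambda_j\neq 0$. Here the very genericity hypothesis on $c$ is exactly what is needed: by definition $A_\sigma^{-1}(c+A_{\bar\sigma}\mathbf{m})$ has no integer entry for any $\mathbf{m}\in\mathbb{Z}_{\geq 0}^{\bar\sigma}$, so the Gamma factor $\Gamma(\mathbf{1}-A_\sigma^{-1}(c+A_{\bar\sigma}(\mathbf{k}+\mathbf{m})))$ in the denominator is always finite and nonvanishing (its argument never hits a non-positive integer, and $1/\Gamma$ vanishes only at non-positive integers). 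Consequently every coefficient of $\varphi_{v^{\mathbf{k}(j)}}$ is nonzero, so the series is a genuinely nonzero formal (and, by the convergence theorem of Section \ref{convergence}, honest analytic) solution. Therefore $\lambda_j=0$ for every $j$, giving linear independence.

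The only remaining point is to confirm that each $\varphi_{v^{\mathbf{k}(j)}}$ really is a solution of $M_A(c)$. This follows by observing that $\varphi_{v^{\mathbf{k}(j)}}$ is, up to the overall factor $z_\sigma^{-A_\sigma^{-1}c}$, a $\Gamma$-series of the shape $\varphi_v(z)=\sum_{u\in L_A} z^{u+v}/\Gamma(\mathbf{1}+u+v)$ discussed in the introduction, with $v$ determined by $\mathbf{k}(j)$ and $Av=-c$; such $\Gamma$-series are known to solve $M_A(c)$. Alternatively, one reads it off directly from the residue computation of Section \ref{series}, where the $\varphi_{v^{\mathbf{k}(j)}}$ arise precisely as the residue series of the Mellin-Barnes integral $F_{\sigma,0}$, which was shown in Section \ref{equation} to satisfy $M_A(c)$.

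I expect the genuinely load-bearing step to be the second one—verifying that very genericity makes every coefficient nonzero and hence forbids a nonzero scalar multiple of a series from vanishing. The disjoint-support argument is clean once (\ref{disjoint}) is granted, but it only reduces independence to the nonvanishing of each individual series; without the very generic assumption one could have $1/\Gamma$-factors vanishing along arithmetic progressions, which could in principle make an entire coset-series collapse and destroy independence. Making the nonvanishing uniform over the whole coset $\Lambda_{\mathbf{k}(j)}$, rather than for a single term, is where the hypothesis must be invoked carefully.
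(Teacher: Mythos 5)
Your proposal is correct and is essentially the paper's own (sketched) argument: the paper deduces the proposition from the disjoint coset decomposition (\ref{disjoint}) together with the remark at the end of Section \ref{equation} that very genericity makes all the resulting $\Gamma$-series nonzero, citing \cite{FF} for the fact that these series solve $M_A(c)$ --- exactly your three steps (disjoint supports, nonvanishing coefficients, $\Gamma$-series solution-hood). One caveat: your \emph{alternative} justification of solution-hood, reading it off from the residue expansion of $F_{\s,0}$, would not stand alone, since knowing that the sum $\sum_{j=1}^r \varphi_{v^{{\bf k}(j)}}$ satisfies the system does not imply that each summand does; your primary justification, that each $\varphi_{v^{{\bf k}(j)}}$ is itself a $\Gamma$-series attached to a vector $v^{{\bf k}(j)}$ with $Av^{{\bf k}(j)}=-c$, is the right one and suffices.
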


\noindent
Now, we have the equality
\begin{equation}
F(z)=\displaystyle\sum_{j=1}^r\varphi_{v^{{\bf k}(j)}}(z).
\end{equation}

\noindent
Note that the convergence domain of $\varphi_{v^{{\bf k}(j)}}(z)$ is again $U_\sigma$. We are going to construct $r$ linearly independent Mellin-Barnes integral solutions. For any given $\tilde{\bf k}\in\mathbb{Z}^\sigma$, consider an analytic continuation $z_\sigma\mapsto e^{2\pi\sqrt{-1}\tilde{\bf k}}z_\sigma.$ We have the corresponding analytic continuation of $F_{\s,0}$ which is denoted by $F_{\s,\tilde{\bf k}}(z).$ This is concretely given by the formula
\begin{equation}
F_{\s,\tilde{\bf k}}(z)=\frac{e^{-2\pi\sqrt{-1}{}^t\tilde{\bf k}A_\sigma^{-1}c}z_\sigma^{-A_\sigma^{-1}c}}{(2\pi\sqrt{-1})^{N-n}}\int_{C_{\overline{\sigma}}}\frac{\Gamma(s_{\overline{\sigma}})(e^{\pi\sqrt{-1}}z_{\overline{\sigma}})^{-s_{\overline{\sigma}}}z_\sigma^{A_\sigma^{-1}A_{\overline{\sigma}}s_{\overline{\sigma}}}e^{2\pi\sqrt{-1}{}^t\tilde{\bf k}A_\sigma^{-1}A_{\overline{\sigma}}s_{\overline{\sigma}}}}{\Gamma({\bf 1}-A_\sigma^{-1}(c-A_{\overline{\sigma}}s_{\overline{\sigma}}))}ds_{\overline{\sigma}}.
\end{equation}

\noindent
A similar arguments as above leads to 

\begin{equation}
F_{\tilde{\bf k}}(z)=e^{-2\pi\sqrt{-1}{}^t\tilde{\bf k}A_\sigma^{-1}c}\displaystyle\sum_{j=1}^re^{-2\pi\sqrt{-1}{}^t\tilde{\bf k}A_\sigma^{-1}A_{\overline{\sigma}}{\bf k}(j)}\varphi_{v^{{\bf k}(j)}}(z).
\end{equation}

We show that the matrix $\left( e^{-2\pi\sqrt{-1}{}^t\tilde{\bf k}(i)A_\sigma^{-1}A_{\overline{\sigma}}{\bf k}(j)} \right)$ is invertible if $\{[\tilde{\bf k}(i)]\}_{i=1}^r$ denotes representatives of a finite abelian group $\mathbb{Z}^{n\times 1}/\mathbb{Z}{}^tA_\sigma$.
 This is due to the following elementary lemma.

\begin{lem}\label{pairing}
The pairing $<\; ,\;>:\mathbb{Z}^{n\times 1}/\mathbb{Z}{}^tA_\sigma\times\mathbb{Z}^{n\times 1}/\mathbb{Z}A_\sigma\rightarrow\mathbb{Q}/\mathbb{Z}$ defined by $<v,w>={}^tvA_\sigma^{-1}w$ is a non-degenerate pairing of finite abelian groups, i. e., for any fixed $v\in\mathbb{Z}^{n\times 1}/\mathbb{Z}{}^tA_\sigma,$ if one has $<v,w>=0$ for all $w\in\mathbb{Z}^{n\times 1}/\mathbb{Z}A_\sigma,$ then we have $v=0$ and vice versa.
\end{lem}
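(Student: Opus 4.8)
The plan is to prove non-degeneracy of the pairing $\langle\, ,\,\rangle$ by exhibiting it as the pairing coming from a short exact sequence of finite abelian groups and invoking the fact that finite abelian groups are (non-canonically) self-dual. First I would observe that the pairing is well defined: if $v\in\Z^{n\times 1}$ is replaced by $v+{}^tA_\sigma\mu$ for $\mu\in\Z^{n\times 1}$, then ${}^t(v+{}^tA_\sigma\mu)A_\sigma^{-1}w={}^tvA_\sigma^{-1}w+{}^t\mu w$ and ${}^t\mu w\in\Z$; symmetrically, replacing $w$ by $w+A_\sigma\nu$ changes the value by ${}^tv\nu\in\Z$. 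Hence the map descends to $\Z^{n\times 1}/\Z{}^tA_\sigma\times\Z^{n\times 1}/\Z A_\sigma\to\Q/\Z$.

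The heart of the argument is a determinant computation. I would fix the standard bases and regard $\langle\, ,\,\rangle$ as induced by the $\Q$-valued bilinear form $(v,w)\mapsto {}^tvA_\sigma^{-1}w$ on $\Z^{n\times 1}\times\Z^{n\times 1}$, whose matrix is $A_\sigma^{-1}$. The key point is that the left and right kernels of the induced pairing on the quotients are exactly the \emph{saturations} forced by integrality: $v$ is in the left radical iff ${}^tvA_\sigma^{-1}w\in\Z$ for all $w\in\Z^{n\times 1}$, i.e.\ iff ${}^tvA_\sigma^{-1}\in\Z^{1\times n}$, i.e.\ iff ${}^tv\in\Z^{1\times n}A_\sigma$, which means $v\in\Z{}^tA_\sigma$ — precisely the zero class. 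The symmetric computation, testing against the standard basis vectors ${\bf e}(i)$, shows $w$ lies in the right radical iff $A_\sigma^{-1}w\in\Z^{n\times 1}$, i.e.\ $w\in\Z A_\sigma$. Thus both radicals are trivial, which is exactly the non-degeneracy asserted. Note this simultaneously shows both groups have order $|\det A_\sigma|$, consistent with $r=|\Z^{n\times 1}/\Z A_\sigma|$.

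I expect the only genuine subtlety to be the step ``${}^tvA_\sigma^{-1}\in\Z^{1\times n}\Rightarrow v\in\Z{}^tA_\sigma$''. This is where one uses that $A_\sigma$ is an integer matrix with $\det A_\sigma\neq 0$: from ${}^tvA_\sigma^{-1}=\mu^t\in\Z^{1\times n}$ one gets ${}^tv=\mu^tA_\sigma$, hence $v={}^tA_\sigma\mu\in\Z{}^tA_\sigma$. The converse implication (vanishing of the pairing on each factor) is immediate. Since non-degeneracy on one side together with equality of the (finite) cardinalities of the two groups already forces non-degeneracy on the other, the ``vice versa'' clause is automatic, but I would in any case verify it directly by the symmetric computation above to keep the argument self-contained.

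To finish, I would package these two radical computations as the two halves of the biconditional in the statement, remarking that once both radicals are shown trivial the induced map $\Z^{n\times 1}/\Z{}^tA_\sigma\to\Hom(\Z^{n\times 1}/\Z A_\sigma,\Q/\Z)$ is injective between groups of the same order $|\det A_\sigma|$, hence an isomorphism; this yields invertibility of the character matrix $\bigl(e^{-2\pi\sqrt{-1}\,{}^t\tilde{\bf k}(i)A_\sigma^{-1}A_{\overline{\sigma}}{\bf k}(j)}\bigr)$ by the standard orthogonality of characters of the finite abelian group $\Z^{n\times 1}/\Z A_\sigma$, which is the application the lemma is intended for.
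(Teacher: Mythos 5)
Your proof is correct, but it takes a genuinely different route from the paper's. The paper reduces to the diagonal case via the Smith normal form: it takes $P,Q\in GL(n,\mathbb{Z})$ with $P^{-1}A_\sigma Q=\diag(1,\cdots,1,d_1,\cdots,d_l)$, $d_1\mid d_2,\cdots,d_{l-1}\mid d_l$, and observes that the lemma is obvious for such a diagonal matrix, where the pairing becomes the standard non-degenerate one $(v,w)\mapsto\sum_i v_iw_i/d_i$ on $\bigoplus_i\mathbb{Z}/d_i\mathbb{Z}$. You instead compute the two radicals directly from integrality: ${}^tvA_\sigma^{-1}w\in\mathbb{Z}$ for all $w\in\mathbb{Z}^{n\times 1}$ forces ${}^tvA_\sigma^{-1}\in\mathbb{Z}^{1\times n}$, hence $v\in\mathbb{Z}{}^tA_\sigma$, and symmetrically (testing against the standard basis vectors) on the other side; this uses nothing beyond $A_\sigma$ being an integer matrix with $\det A_\sigma\neq 0$. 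The trade-off: the paper's reduction is shorter and exhibits the group structure (hence the duality) completely explicitly, but it leaves implicit both the well-definedness of the pairing and the check that the pairing is compatible with the substitutions $v\mapsto{}^tQv$, $w\mapsto P^{-1}w$ identifying the quotients for $A_\sigma$ with those for $P^{-1}A_\sigma Q$; your argument avoids the structure theorem altogether, spells out well-definedness and both halves of the biconditional, and your closing counting remark (an injection between finite groups of the same order $\lvert\det A_\sigma\rvert$ is an isomorphism) is precisely the argument the paper itself uses in the Proposition following the lemma to deduce $\mathbb{Z}^{n\times 1}/\mathbb{Z}{}^tA_\sigma\simeq\widehat{\mathbb{Z}^{n\times 1}/\mathbb{Z}A_\sigma}$. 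Both routes constitute complete proofs.
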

The proof is quite elementary because we know that there are invertible integer matrices $P,Q\in GL(n,\mathbb{Z})$ such that $P^{-1}A_\sigma Q=diag(1,\cdots ,1,d_1,\cdots ,d_l)$ where $d_1,\cdots ,d_l\in\mathbb{Z}^\times$ satisfy $d_1|d_2,\cdots ,d_{l-1}|d_l,$ and the lemma is obvious when $A_\sigma$ is replaced by $P^{-1}A_\sigma Q.$ Thanks to this lemma, we have the following

\begin{prop}
$\mathbb{Z}^{n\times 1}/\mathbb{Z}{}^tA_\sigma\tilde{\rightarrow}\widehat{\mathbb{Z}^{n\times 1}/\mathbb{Z}A_\sigma}$, where the isomorphism is induced from the pairing $<\; ,\;>.$

\end{prop}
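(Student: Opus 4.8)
The plan is to build the isomorphism explicitly out of the pairing and then upgrade the non-degeneracy of \cref{pairing} to a bijection by a cardinality count. Write $G=\mathbb{Z}^{n\times 1}/\mathbb{Z}{}^tA_\sigma$ and $H=\mathbb{Z}^{n\times 1}/\mathbb{Z}A_\sigma$, and define
\begin{equation}
\Phi:G\longrightarrow\widehat{H}=\Hom(H,\mathbb{Q}/\mathbb{Z}),\qquad \Phi(v)=\bigl(w\mapsto\langle v,w\rangle\bigr).
\end{equation}
First I would verify that $\Phi$ is a well-defined group homomorphism. Since $\langle v,w\rangle={}^tvA_\sigma^{-1}w$ is $\mathbb{Z}$-bilinear, $\Phi(v)$ is a homomorphism $H\to\mathbb{Q}/\mathbb{Z}$ and $\Phi$ is additive in $v$; the descent to the two quotients is precisely what makes $\langle\,,\,\rangle$ well-defined in \cref{pairing} (if $v-v'={}^tA_\sigma k$ then ${}^t(v-v')A_\sigma^{-1}w={}^tkw\in\mathbb{Z}$, and symmetrically in the second slot). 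So this step is immediate from the bilinearity already used to state the pairing.

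Next I would read off injectivity of $\Phi$ directly from \cref{pairing}: the assertion there that $\langle v,w\rangle=0$ for all $w$ forces $v=0$ is exactly the statement $\ker\Phi=0$. To conclude that the injection $\Phi$ is an isomorphism I would close with an order count. Both $G$ and $H$ are finite of order $\lvert\det A_\sigma\rvert$, since $\lvert\mathbb{Z}^{n\times 1}/\mathbb{Z}A_\sigma\rvert=\lvert\det A_\sigma\rvert$ and $\det{}^tA_\sigma=\det A_\sigma$; moreover $\lvert\widehat{H}\rvert=\lvert H\rvert$ for a finite abelian group $H$. Hence $\lvert\widehat{H}\rvert=\lvert G\rvert<\infty$, and an injective homomorphism between finite groups of equal order is an isomorphism. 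Alternatively, one may avoid the determinant computation altogether by invoking the two-sided non-degeneracy of \cref{pairing}: the symmetric construction gives an injection $H\hookrightarrow\widehat{G}$, whence $\lvert H\rvert\le\lvert G\rvert$ and $\lvert G\rvert\le\lvert H\rvert$ simultaneously, forcing equality and bijectivity of both maps.

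There is no genuine obstacle here: all the mathematical content resides in the non-degeneracy proved in \cref{pairing} via the Smith normal form $P^{-1}A_\sigma Q=\diag(1,\dots,1,d_1,\dots,d_l)$. The only point that deserves care is the bookkeeping in the order count, namely observing that the character group $\widehat{H}$ shares the order of $H$ and that $\det{}^tA_\sigma=\det A_\sigma$ matches $\lvert G\rvert$ with $\lvert H\rvert$; once this is in place, injectivity automatically promotes $\Phi$ to the asserted isomorphism $G\;\tilde{\rightarrow}\;\widehat{H}$.
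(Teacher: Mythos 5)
Your proposal is correct and follows essentially the same route as the paper: injectivity of the induced map comes from the non-degeneracy established in \cref{pairing}, and surjectivity follows by counting elements of the finite groups involved. The only cosmetic difference is that you work directly with $\Hom_{\mathbb{Z}}(\cdot,\mathbb{Q}/\mathbb{Z})$ as the character group, whereas the paper first lands there and then embeds into $\Hom(\cdot,\mathbb{C}^*)$ via $\exp(-2\pi\sqrt{-1}\,\cdot)$ before counting; these are interchangeable for finite abelian groups.
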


\begin{proof}
Since we have a group embedding $\exp(-2\pi\ii\cdot):\mathbb{Q}/\mathbb{Z}\hookrightarrow\mathbb{C}^{*},$ the pairing $<\; ,\;>$ induces
\begin{equation}
\mathbb{Z}^{n\times 1}/\mathbb{Z}{}^tA_\sigma\tilde{\rightarrow}\Hom_{\mathbb{Z}}(\mathbb{Z}^{n\times 1}/\mathbb{Z}A_\sigma,\mathbb{Q}/\mathbb{Z})\hookrightarrow\widehat{\mathbb{Z}^{n\times 1}/\mathbb{Z}A_\sigma}.
\end{equation}
Counting the number of elements, we have the proposition.
\end{proof}

Thus, the matrix $\frac{1}{r}\left( e^{-2\pi\sqrt{-1}{}^t\tilde{\bf k}(i)A_\sigma^{-1}A_{\overline{\sigma}}{\bf k}(j)} \right)$ is a unitary matrix by the orthogonality of irreducible characters.

\begin{rem}
For any finite abelian group $G$, we have an isomorphism 
\begin{equation}
\Hom (G,\mathbb{Q}/\mathbb{Z})\simeq\Ext^1(G,\mathbb{Z}).
\end{equation}
The pairing of Lemma \ref{pairing} is induced naturally from this isomorphism.
\end{rem}
\noindent
Employing the terminology of regular triangulation (\cite{FF}), we obtain the main

\begin{thm}\label{thm:MainTheoremForMellin-Barnes}
Let $T$ be a regular triangulation such that $\forall\s\in T$, $|A_\s^{-1}{\bf a}(j)|\leq 1(\forall j\in\bs)$. We assume $c$ is very generic. Then, for each element $\tilde{\bf k}\in\Z^{n\times 1}/\Z{}^tA_\s$, we can associate an analytic continuation $F_{\s,\tk}$ of $F_{\s,0}$. If $\{\tk(i)\}_{i=1}^r$ denotes a complete system of representatives of $\Z/\Z{}^tA_\s$, the set of functions $\bigcup_{\s\in T}\{ F_{\s,\tk(i)}\}_{i=1}^r$ forms a basis of solutions of $M_A(c)$. Moreover, the transformation matrix between $\{F_{\s,\tk(i)}\}_{i=1}^r$ and $\{\varphi_{\sigma,{\bf k}(i)}\}_{i=1}^r$ is given by 
\begin{equation}
T_\sigma=\diag\left(\exp\{-2\pi\sqrt{-1}{}^t\tilde{\bf k}(i)A_\sigma^{-1}c\}\right)_{i=1}^r\Bigl(\exp\{ -2\pi\sqrt{-1}{}^t\tilde{\bf k}(i)A^{-1}_\sigma A_{\bar{\sigma}}{\bf k}(j)\}\Bigr)_{i,j=1}^r
\end{equation}
i.e., we have a relation
\begin{equation}
\begin{pmatrix}
F_{\s,\tk(1)}\\
\vdots\\
F_{\s,\tk(r)}
\end{pmatrix}
=
T_\s
\begin{pmatrix}
\varphi_{\sigma,{\bf k}(1)}\\
\vdots\\
\varphi_{\sigma,{\bf k}(r)}
\end{pmatrix}.
\end{equation}
Here, the second matrix $\Bigl(\exp\{ -2\pi\sqrt{-1}{}^t\tilde{\bf k}(i)A^{-1}_\sigma A_{\bar{\sigma}}{\bf k}(j)\}\Bigr)_{i,j=1}^r$ is the character matrix of $\Z^{n\times 1}/\Z A_\s$. 
\end{thm}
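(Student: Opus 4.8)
The plan is to assemble the theorem from the pieces already established, since the main analytic and combinatorial content has been isolated in the preceding results. The key ingredients are: (i) the convergence of each $F_{\s,0}$ on $U_\s$ (the convergence theorem of \cref{convergence}); (ii) the fact that each $F_{\s,0}$ satisfies $M_A(c)$ (\cref{equation}), together with the observation that analytic continuation along $z_\s\mapsto e^{2\pi\ii\tk}z_\s$ preserves solutionhood; (iii) the residue expansion $F_{\s,0}(z)=\sum_{j=1}^r\varphi_{v^{{\bf k}(j)}}(z)$ and its twisted analogues $F_{\s,\tk}(z)=e^{-2\pi\ii\,{}^t\tk A_\s^{-1}c}\sum_{j=1}^r e^{-2\pi\ii\,{}^t\tk A_\s^{-1}A_{\bs}{\bf k}(j)}\varphi_{v^{{\bf k}(j)}}(z)$; and (iv) the non-degeneracy of the pairing $\langle\,,\rangle$ on $\Z^{n\times1}/\Z{}^tA_\s\times\Z^{n\times1}/\Z A_\s$ (\cref{pairing}) and the resulting character-matrix invertibility.

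First I would record the transformation relation. For a fixed simplex $\s\in T$, the displayed residue computation for $F_{\s,\tk}$ shows immediately that the column vector $(F_{\s,\tk(i)})_{i=1}^r$ equals $T_\s(\varphi_{\sigma,{\bf k}(j)})_{j=1}^r$, where $T_\s$ factors as the stated diagonal matrix times the character matrix $\bigl(\exp\{-2\pi\ii\,{}^t\tk(i)A_\s^{-1}A_{\bs}{\bf k}(j)\}\bigr)_{i,j}$. By \cref{pairing} and the ensuing proposition, the entries $\exp\{-2\pi\ii\,{}^t\tk(i)A_\s^{-1}A_{\bs}{\bf k}(j)\}$ are exactly the values of the irreducible characters of $\Z^{n\times1}/\Z A_\s$ evaluated on a complete set of representatives, so this matrix is (up to the normalizing factor $1/r$) unitary, hence invertible, and the diagonal factor is obviously invertible since its entries are roots of unity times units. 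Therefore $T_\s$ is invertible, and since $\{\varphi_{\sigma,{\bf k}(j)}\}_{j=1}^r$ is a linearly independent set of solutions (the proposition of \cref{series}, citing \cite{FF}), the set $\{F_{\s,\tk(i)}\}_{i=1}^r$ spans the same $r$-dimensional space and is itself a linearly independent family of solutions of $M_A(c)$.

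Next I would globalize over the triangulation. For a regular triangulation $T$ the simplices $\s$ partition $\Delta_A$ and $\sum_{\s\in T}\vol_\Z(\s)=\vol_\Z(\Delta_A)=\rank M_A(c)$ by \cref{theorem:fundamental}(2); since $\vol_\Z(\s)=|\det A_\s|=r$ for each maximal simplex, the total count $\sum_{\s\in T} r_\s$ matches the rank. The linear independence of the full collection $\bigcup_{\s\in T}\{F_{\s,\tk(i)}\}$ then follows from the corresponding statement for the $\Gamma$-series basis associated to a regular triangulation in \cite{FF}: each block $\{\varphi_{\sigma,{\bf k}(j)}\}_{j}$ comes from the distinct simplices and these $\Gamma$-series are jointly linearly independent, and applying the invertible block maps $T_\s$ preserves joint independence. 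Having exhibited $\sum_\s r_\s=\rank M_A(c)$ linearly independent solutions, I conclude that they form a basis.

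The main obstacle is step (iii), the passage from the contour integral $F_{\s,\tk}$ to the sum of $\Gamma$-series: one must justify shifting each Barnes contour $\Gamma_{0,1}$ to the left past the poles of $\Gamma(s_i)$, picking up residues, and controlling the remainder as $M\to\infty$. This is precisely what the inductive formula in \cref{series} sets up, and the uniform bound $|\text{integrand}|\le C\prod_{i\in\bs}e^{-A_i|\Re(s_i)|}$ on the shifted contours (valid for $z\in U_\s$ once one stays away from the spirals $\alpha_{li}^{-1}\Z$) is what makes the remainder vanish; the delicate point is that this estimate holds uniformly, which relies on the convergence analysis of the previous section. For the write-up I would treat this residue/remainder analysis as already carried out in \cref{series} and simply invoke it, so that the theorem becomes a formal consequence of the transformation relation, the invertibility of $T_\s$ via \cref{pairing}, and the dimension count from \cref{theorem:fundamental}(2).
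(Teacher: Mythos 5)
Your proposal is correct and follows essentially the same route as the paper: the paper's own argument is precisely the assembly of the preceding sections (the GKZ-equation check, the convergence on $U_\s$, the residue expansion with its remainder estimate, and the invertibility of $T_\s$ via \cref{pairing} and character orthogonality), combined with the \cite{FF} result that the $\Gamma$-series attached to a regular triangulation are jointly linearly independent and match $\rank M_A(c)=\vol_\Z(\Delta_A)$ in number. Your dimension count and block-wise application of the invertible matrices $T_\s$ is exactly what the paper's closing phrase ``employing the terminology of regular triangulation'' encapsulates.
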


\end{document}